\newtheorem{Theo}[subsubsection]{Theorem}
\newtheorem{Theor}{Theorem}
\newtheorem{Theore}{Theorem}
\newtheorem{cor}[subsubsection]{Corollary}
\theoremstyle{definition}
\newtheorem{fact}[subsubsection]{Fact} 
\newtheorem{rem}[subsubsection]{Remark}
\newtheorem{question}[subsection]{Question}
\newtheorem{Prop}[subsubsection]{Proposition}
\newtheorem{Obs}[subsubsection]{Observation}
\newtheorem{example}[subsubsection]{Example}
\newtheorem{Lemm}[subsubsection]{Lemma} 
\newtheorem{definition}[subsubsection]{Definition}
\newtheorem{problem}[subsubsection]{Problem}
\newtheorem{ex}[subsection]{Exercise}
\DeclareSymbolFont{largesymbols}{OMX}{yhex}{m}{n}
\DeclareMathAccent{\widetilde}{\mathord}{largesymbols}{"65}
\newcommand{\bp}{\begin{Prop}}
\newcommand{\ep}{\end{Prop}}
\newcommand{\bl}{\begin{Lemm}}
\newcommand{\el}{\end{Lemm}}
\newcommand{\bex}{\begin{ex} \rm}
\newcommand{\eex}{\end{ex}}
\newcommand{\bt}{\begin{Theo}}
\newcommand{\et}{\end{Theo}}
\newcommand{\bq}{\begin{question}}
\newcommand{\eq}{\end{question}}
\newcommand{\bc}{\begin{cor}}
\newcommand{\ec}{\end{cor}}
\newcommand{\bob}{\begin{Obs}}
\newcommand{\eob}{\end{Obs}}
\newcommand{\nc}{\newcommand}
\nc{\renc}{\renewcommand}
\nc{\ssec}{\subsection}
\nc{\sssec}{\subsubsection} 
\nc\ol{\overline}
\nc\wt{\widetilde}
\nc\wh{\widehat}
\nc\tboxtimes{\wt{\boxtimes}}
\renc{\d}{{\delta}}
\nc{\Aa}{{\mathbb{A}}}
\nc{\Bb}{{\mathbb{B}}}
 \nc{\Gg}{{\mathbb{G}}}  
\nc{\Hh}{{\mathbb{H}}}
 \nc{\Nn}{{\mathbb{N}}}
\nc{\Pp}{{\mathbb{P}}}
\nc{\Rr}{{\mathbb{R}}}
\newcommand{\F}{\mathbb{F}}
\nc{\BV}{{\mathbb{V}}}
\nc{\BW}{{\mathbb{W}}}
\newcommand{\Z}{\mathbb{Z}}
\newcommand{\N}{\mathbb{N}}
\nc{\Qq}{{\mathbb{Q}}}
\nc{\Ss}{{\mathbb{S}}}
\nc{\Cc}{{\mathbb{C}}}
\nc{\Ff}{{\mathbb{F}}}
 \nc{\EL}{{L_\infty}}
\nc{\CA}{{\mathcal{A}}}
\nc{\CB}{{\mathcal{B}}}
\nc{\CE}{{\mathcal{E}}}
\nc{\CF}{{\mathcal{F}}}
\nc{\Las}{\mathsf{Las}}
\nc{\CG}{{\mathcal{G}}}
\nc{\CL}{{\mathcal{L}}}
\nc{\CC}{{\mathcal{C}}}
\nc{\CM}{{\mathcal{M}}}
\nc{\CN}{{\mathcal{N}}}
\nc{\Oog}{{\mathbb{O}}}
\nc{\Oo}{{\mathcal{O}}}
\nc{\CP}{{\mathcal{P}}}
\nc{\CQ}{{\mathcal{Q}}}
\nc{\CR}{{\mathcal{R}}}
\nc{\CS}{{\mathcal{S}}}
\nc{\CT}{{\mathcal{T}}}
\nc{\CU}{{\mathcal{P}}}
\nc{\CV}{{\mathcal{V}}}
\nc{\CW}{{\mathcal{W}}}
\nc{\CZ}{{\mathcal{Z}}}
\nc{\cM}{{\check{\mathcal M}}{}}
\nc{\csM}{{\check{\mathcal A}}{}}
\nc{\oM}{{\overset{\circ}{\mathcal M}}{}}
\nc{\obM}{{\overset{\circ}{\mathbf M}}{}}
\nc{\oCA}{{\overset{\circ}{\mathcal A}}{}}
\nc{\obA}{{\overset{\circ}{\mathbf A}}{}}
\nc{\ooM}{{\overset{\circ}{M}}{}}
\nc{\osM}{{\overset{\circ}{\mathsf M}}{}}
\nc{\vM}{{\overset{\bullet}{\mathcal M}}{}}
\nc{\nM}{{\underset{\bullet}{\mathcal M}}{}}
\nc{\oD}{{\overset{\circ}{\mathcal D}}{}}
\nc{\obD}{{\overset{\circ}{\mathbf D}}{}}
\nc{\oA}{{\overset{\circ}{\mathbb A}}{}}
\nc{\op}{{\overset{\bullet}{\mathbf p}}{}}
\nc{\cp}{{\overset{\circ}{\mathbf p}}{}}
\nc{\oU}{{\overset{\bullet}{\mathcal U}}{}}
\nc{\oZ}{{\overset{\circ}{\mathcal Z}}{}}
\nc{\ofZ}{{\overset{\circ}{\mathfrak Z}}{}}
\nc{\oF}{{\overset{\circ}{\fF}}}
\nc{\fa}{{\mathfrak{a}}}
\nc{\fb}{{\mathfrak{b}}}
\nc{\fg}{{\mathfrak{g}}}
\nc{\fgt}{{\fg}_!}
\nc{\fgl}{{\mathfrak{gl}}}
\nc{\fh}{{\mathfrak{h}}}
\nc{\fj}{{\mathfrak{j}}}
\nc{\fm}{{\mathfrak{m}}}
\nc{\ft}{{\mathfrak{t}}}
\nc{\fn}{{\mathfrak{n}}}
\nc{\fu}{{\mathfrak{u}}}
\nc{\fp}{{\mathfrak{p}}}
\nc{\fr}{{\mathfrak{r}}}
\nc{\fs}{{\mathfrak{s}}}
\nc{\fsl}{{\mathfrak{sl}}}
\nc{\hsl}{{\widehat{\mathfrak{sl}}}}
\nc{\hgl}{{\widehat{\mathfrak{gl}}}}
\nc{\hg}{{\widehat{\mathfrak{g}}}}
\nc{\chg}{{\widehat{\mathfrak{g}}}{}^\vee}
\nc{\hn}{{\widehat{\mathfrak{n}}}}
\nc{\chn}{{\widehat{\mathfrak{n}}}{}^\vee}
\nc{\fA}{{\mathfrak{A}}}
\nc{\ACVF}{{\text{ACVF}}}
\nc{\fB}{{\mathfrak{B}}}
\nc{\fD}{{\mathfrak{D}}}
\nc{\fE}{{\mathfrak{E}}}
\nc{\fF}{{\mathfrak{F}}}
\nc{\fG}{{\mathfrak{G}}}
\nc{\fK}{{\mathfrak{K}}}
\nc{\fL}{{\mathfrak{L}}}
\nc{\fM}{{\mathfrak{M}}}
\nc{\fN}{{\mathfrak{N}}}
\nc{\fP}{{\mathfrak{P}}}
\nc{\fU}{{\mathfrak{U}}}
\nc{\fV}{{\mathfrak{V}}}
\nc{\fZ}{{\mathfrak{Z}}}
\newcommand{\Q}{\mathbb{Q}}
\nc{\bb}{{\mathbf{b}}}
\nc{\bd}{\partial}
\nc{\be}{{\mathbf{e}}}
\nc{\bj}{{\mathbf{j}}}
\nc{\bn}{{\mathbf{n}}}
\nc{\bF}{{\mathbf{F}}}
\nc{\bu}{{\mathbf{u}}}
\nc{\bv}{{\mathbf{v}}}
\nc{\bx}{{\mathbf{x}}}
\nc{\bs}{{\mathbf{s}}}
\nc{\by}{{\bar{y}}}
\nc{\bw}{{\mathbf{w}}}
\nc{\bA}{{\mathbf{A}}}
\nc{\bK}{{\mathbf{K}}}
\nc{\bI}{{\mathbf{I}}}
\nc{\bB}{{\mathbf{B}}}
\nc{\bG}{{\mathbf{G}}}
\nc{\bD}{{\mathbf{D}}}
\nc{\bP}{{\mathbf{P}}}
\nc{\bH}{{\mathbf{H}}}
\nc{\bM}{{\mathbf{M}}}
\nc{\bN}{{\mathbf{N}}}
\nc{\bV}{{\mathbf{V}}}
\nc{\bU}{{\mathbf{U}}}
\nc{\bL}{{\mathbf{L}}}
\nc{\bW}{{\mathbf{W}}}
\nc{\bX}{{\mathbf{X}}}
\nc{\bY}{{\mathbf{Y}}}
\nc{\bZ}{{\mathbf{Z}}}
\nc{\bS}{{\mathbf{S}}}
\nc{\bSi}{{\bar{\Sigma}}}
\nc{\sA}{{\mathsf{A}}}
\nc{\sB}{{\mathsf{B}}}
\nc{\sC}{{\mathsf{C}}}
\nc{\sD}{{\mathsf{D}}}
\nc{\sF}{{\mathsf{F}}}
\nc{\sG}{{\mathsf{G}}}
\nc{\sK}{{\mathsf{K}}}
\nc{\sM}{{\mathsf{M}}}
\nc{\sO}{{\mathsf{O}}}
\nc{\sQ}{{\mathsf{Q}}}
\nc{\sP}{{\mathsf{P}}}
\nc{\sZ}{{\mathsf{Z}}}
\nc{\sfp}{{\mathsf{p}}}
\nc{\sr}{{\mathsf{r}}}
\nc{\sg}{{\mathsf{g}}}
\nc{\sff}{{\mathsf{f}}}
\nc{\sfb}{{\mathsf{b}}}
\nc{\sfc}{{\mathsf{c}}}
\nc{\sd}{{\ltimes}} 
\nc{\tH}{{\widetilde{H}}}
\nc{\tA}{{\widetilde{\mathbf{A}}}}
\nc{\tB}{{\widetilde{\mathcal{B}}}}
\nc{\tg}{{\widetilde{\mathfrak{g}}}}
\nc{\tG}{{\widetilde{G}}}
\nc{\TM}{{\widetilde{\mathbb{M}}}{}}
\nc{\tO}{{\widetilde{\mathsf{O}}}{}}
\nc{\tU}{\widetilde{U}}
\nc{\TZ}{{\tilde{Z}}}
\nc{\tx}{{\tilde{x}}}
\nc{\tq}{{\tilde{q}}}
\nc{\Trop}{\text{Trop}}
\nc{\tfP}{{\widetilde{\mathfrak{P}}}{}}
\nc{\tz}{{\tilde{\zeta}}}
\nc{\tmu}{{\tilde{\mu}}}
  \nc{\vol}{{\mathop{\operatorname{\rm vol\,}}}}
  \nc{\gal}{{\mathop{\operatorname{\rm Gal\,}}}}
  \nc{\cl}{{\mathop{\operatorname{\rm cl}}}}
  \nc{\disc}{{\mathop{\operatorname{\rm disc}}}}
  \nc{\Sym}{{\mathop{\operatorname{\rm Sym}}}}
   \nc{\Aut}{{\mathop{\operatorname{\rm Aut}}}}
 \nc{\Spec}{{\mathop{\operatorname{\rm Spec}}}}
  \nc{\spec}{{\mathop{\operatorname{\rm Spec}}}}
\nc{\Ker}{{\mathop{\operatorname{\rm Ker}}}}
 \nc{\dom}{{\mathop{\operatorname{\rm dom}}}}
\nc{\End}{{\mathop{\operatorname{\rm End}}}}
 \nc{\Hom}{\operatorname{\Hom}}
 \nc{\GL}{{\mathop{\operatorname{\rm GL}}}}
 \nc{\Id}{{\mathop{\operatorname{\rm Id}}}}
 \nc{\rk}{{\mathop{\operatorname{\rm rk}}}}
 \nc{\length}{{\mathop{\operatorname{\rm length}}}}
\nc{\supp}{{\mathop{\operatorname{\rm supp} \, }}}
\nc{\val}{{\rm val}}
\def\Ind#1#2#3{{#1} {\downarrow}_{#3} {#2} }
\nc{\seq}[1]{\stackrel{#1}{\sim}}
\def\beq#1{\begin{equation} \label{ #1}}
\def\eeq{\end{equation}}
\def\prf{\begin{proof}}
\def\pv{\end{proof} }
 \def\eprf{\end{proof} }
 \renc{\b}{{\beta}}
\def\Ind#1#2{#1\setbox0=\hbox{$#1x$}\kern\wd0\hbox to 0pt{\hss$#1\mid$\hss}
\lower.9\ht0\hbox to 0pt{\hss$#1\smile$\hss}\kern\wd0}
  \def\C{\mathbb{C}}
\def\@setthanks{\vspace{-\baselineskip}\def\thanks##1{\@par##1\@addpunct.}\thankses}
\title{Valued FIELDs WITH A total RESIDUE MAP}
\author{Konstantinos Kartas}
\thanks{During this research, the author was funded by EPSRC grant EP/20998761 and was also supported by the Onassis Foundation - Scholarship ID: F ZP 020-1/2019-2020.}
\newcommand{\Addresses}{{
  \bigskip
  \footnotesize

\textsc{Mathematical Institute, Woodstock Road, Oxford OX2 6GG.}\par\nopagebreak
  \textit{E-mail address}: \texttt{kartas@maths.ox.ac.uk}
}}
\begin{document}
\maketitle

\begin{abstract}
When $k$ is a \textit{finite} field, Becker-Denef-Lipschitz (1979) observed that the \textit{total residue} map $\text{res}:k(\!(t)\!)\to k$, which picks out the constant term of the Laurent series, is definable in the language of rings with a parameter for $t$. Driven by this observation, we study the theory $\text{VF}_{\text{res},\iota}$ of valued fields equipped with a linear form $\text{res}:K\to k$ which restricts to the residue map on the valuation ring. We prove that $\text{VF}_{\text{res},\iota}$ does not admit a model companion. In addition, we show that $(k(\!(t)\!),\text{res})$ is \textit{undecidable} whenever $k$ is an \textit{infinite} field. As a consequence, we get that $(\Cc(\!(t)\!), \text{Res}_0)$ is undecidable, where $\text{Res}_0:f\mapsto \text{Res}_0(f)$ maps $f$ to its complex residue at $0$.  
\end{abstract}
\setcounter{tocdepth}{1}
\section*{Introduction}
Let $k$ be a field. Consider the power series field $k(\!(t)\!)$ and let $\text{res}:k(\!(t)\!)\to k$ be the \textit{total residue} map 
$$\text{res}:\sum_{i=-N}^{\infty} c_i t^i \mapsto c_0$$ 
Viewing $k(\!(t)\!)$ as a $k$-vector space, we see that $\text{res}:k(\!(t)\!)\to k$ is a linear form which extends the usual residue map $\pi :k[\![t]\!]\to k$ to all of $k(\!(t)\!)$ (hence the name \textit{total}). The motivation for studying the model-theory of this structure is twofold:
\begin{enumerate}
\item In complex analysis, one defines the \textit{residue} of a complex meromorphic function $f\in \C(\!(t)\!)$ at an isolated singularity $a\in \C$, denoted by $\text{Res}(f,a)$ or $\text{Res}_a(f)$. The map $\text{res}$ is essentially a shifted version of $\text{Res}_0:\C(\!(t)\!)\to \C: f\mapsto \text{Res}_0(f)$, namely $\text{res}(t\cdot f)=\text{Res}_0(f)$. 
\item 
Becker-Denef-Lipschitz \cite{BDL} showed for $x\in \F_p(\!(t)\!)$ that $\text{res}(x)=0$ precisely when there exist $x_0,x_1,...,x_{p-1}\in \F_p(\!(t)\!)$ such that
$$x=x_0^p-x_0+tx_1^p+....+t^{p-1}x_{p-1}^p$$ 
This easily implies that $\text{res}:\F_p(\!(t)\!)\to \F_p$ is definable in $L_{\text{rings}}$ with a parameter for $t$. Becker-Denef-Lipschitz used this to show that $\F_p(\!(t)\!)$ is undecidable in the language of valued fields with a cross-section. This was in sharp contrast with the result by Ax-Kochen \cite{AK3} and Ershov \cite{Ershov}: For any decidable field $k$ of characteristic $0$, the power series field $k(\!(t)\!)$ is decidable in the language of valued fields with a cross-section.
\end{enumerate}
Towards understanding the model theory of $\F_p(\!(t)\!)$, it may be instructive to isolate such definable functions (or predicates) and study them over $k(\!(t)\!)$, where $k$ is not necessarily finite. 
This approach is largely influenced by Cherlin \cite{Cherlin}, especially Problems 3 and 4 in \S 5 \cite{Cherlin}. 

In the present paper, we study the model theory of $k(\!(t)\!)$ equipped with $\text{res}:k(\!(t)\!)\to k$. We also take this a step further and study valued fields---not necessarily power series fields---enriched with a total residue map $\text{res}:K\to k$. In the axiomatic setting, the \textit{total residue} map $\text{res}:K\to k$ is assumed to be a linear form extending the usual residue map $\pi:\Oo_K \to k$, where $\Oo_K$ is the valuation ring of $K$. We consider the theory of equal characteristic valued fields $K$ with a total residue map $\text{res}:K\to k$ and a \textit{lift} $\iota:k\to K$ of the residue field, namely $\iota$ is a field embedding such that $\pi \circ \iota=\text{id}_k$. We call $\text{VF}_{\text{res},\iota}$ the resulting theory. 

At first glance, the theory $\text{VF}_{\text{res},\iota}$ seems like an innocent variant of $\text{VF}$. However, we will show the following:
\begin{Theore} \label{modelcomp}
The theory $\text{VF}_{\text{res},\iota}$ does \textit{not} admit a model companion.
\end{Theore}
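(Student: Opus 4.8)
The plan is to apply Robinson's test: $\text{VF}_{\text{res},\iota}$ admits a model companion if and only if its class $\mathcal{E}$ of existentially closed models is elementary. Since $\mathcal{E}$ is automatically closed under unions of chains, it is enough to exhibit a failure of closure under ultraproducts: a family $(M_n)_{n\in\Nn}$ of e.c.\ models and an ultrafilter $\mathcal{U}$ with $\prod_n M_n/\mathcal{U}\notin\mathcal{E}$.

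First I would pin down the shape of an e.c.\ model $M$. Its underlying valued field is a model of $\ACVF$ (algebraically closed, divisible value group, algebraically closed residue field $k$, henselian), since each of these features is attained by a valued-field extension over which $\iota$ and $\text{res}$ extend; here the equal-characteristic hypothesis together with the lift $\iota$ is precisely what permits extending the residue map along an arbitrary valued-field extension, by splitting off a $k$-linear complement of the valuation ring. The genuinely new point concerns $\text{res}$ itself: its only axioms are $k$-linearity and agreement with $\pi$ on $\Oo_M$, so for any $y$ with $v(y)<0$ the value $\text{res}(y)$ can be freely reset in an extension; iterating this, for every finite system of conditions $\text{res}(p_i(\bar a,\bar b))=c_i$ that is consistent with $k$-linearity and with the associativity identities $\text{res}(fg\cdot y)=\text{res}(f\cdot gy)$, there is a $\text{VF}_{\text{res},\iota}$-extension of $M$ (adjoining new elements $\bar b$ of prescribed negative valuations) realizing it, and hence—$M$ being e.c.—the system is already realized in $M$. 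I will refer to this as the \emph{local genericity of the residue pairing} $(f,y)\mapsto\text{res}(fy)$.

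The heart of the argument is to construct a single one-variable existential type $p(x)=\{\sigma_n(x):n\in\Nn\}$ (over $\emptyset$, or over $\Z/p\Z$ in the positive-characteristic reduct) such that: (i) every model of $\text{VF}_{\text{res},\iota}$ embeds into one realizing $\sigma_n$, so $\exists x\,\sigma_n(x)$ holds in every e.c.\ model; and (ii) any model realizing the full type $p$ at an element $b$ fails to be existentially closed, because the coherent residue data that $p$ forces on $b$ make some further existential statement $\tau(b,\bar z)$—asking for a fresh solution of a residue system whose residue pattern is transverse to the one carried by $b$—satisfiable in a $\text{VF}_{\text{res},\iota}$-extension but not in the model itself, in contradiction with local genericity. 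Concretely, the $\sigma_n$ would fix, for a distinguished $x$ of negative valuation, a growing finite fragment of the residue data attached to solutions of a coupled family of $\text{res}$-equations in $x$: each finite fragment is freely realizable in an extension, but the completed datum pins $x$ down to an ``honest limit of truncations''. Granting (i) and (ii): if $\mathcal{E}$ were elementary, an $\aleph_1$-saturated model of $\mathrm{Th}(\mathcal{E})$ would be e.c.\ yet, by compactness applied to $\{\exists x\,\sigma_n(x):n\}\subseteq\mathrm{Th}(\mathcal{E})$, would realize $p$, contradicting (ii); taking instead explicit e.c.\ $M_n$ realizing $\sigma_n$ and an ultraproduct yields the promised failure of closure under ultraproducts, so $\text{VF}_{\text{res},\iota}$ has no model companion.

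I expect step (ii) to be essentially the whole difficulty: designing $p$ and the witness $\tau$ so that \emph{finite} freedom is genuinely present while \emph{no} freedom survives in the limit. This requires a relative form of the extension lemmas of the second paragraph—fine control of which extensions of $\text{res}$ exist over a given base—and, crucially, arranging that the infinitary coherence encoded in $p$ is incompatible with, rather than implied by, local genericity of the residue pairing. The subtle part is that over an algebraically closed valued field roots and algebraic elements are rigidly determined, so the limiting rigidity cannot come from the field structure and must be manufactured through $\text{res}$ alone; this is exactly the point at which one has to exploit the tension between $\text{res}$ being merely $k$-linear globally and being the ring homomorphism $\pi$ on the valuation ring.
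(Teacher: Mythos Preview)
Your framework---Robinson's test via a type finitely realized in every e.c.\ model---is correct and matches the paper, but the proof is entirely in the details you have not supplied, and your step (ii) is mis-stated. The paper's type is $\sigma_n(a)$: ``$a\in\mathfrak{m}\setminus\{0\}$ and $\text{res}(a^{-m})=0$ for $m=1,\dots,n$''; realizability in every e.c.\ model comes from adjoining an infinitesimal $X$ with $w(X)>\Gamma$ and noting that $X^{-1},\dots,X^{-n}$ are $k$-independent over $\Oo_{K(X)}+K$, so their residues may be set to zero. The tension comes from a second, independent extension lemma using the \emph{Gauss} valuation on $K(X)$: if $a\in\mathfrak{m}\setminus\{0\}$ and $v(c)<\Z\cdot v(a)$, then $\tfrac{c}{1-aX}$ is $k(X)$-independent of $\Oo_{K(X)}+\langle K\rangle_{k(X)}$, so one may set $\text{res}\bigl(\tfrac{c}{1-aX}\bigr)=0$; hence in every e.c.\ model such $a,c$ admit $\beta\in k^\times$ with $\text{res}\bigl(\tfrac{c}{1-a\beta}\bigr)=0$. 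These two lemmas are pitted against a geometric-series identity: if $\text{res}(a^{-m})=0$ for $m\le n$, then $\tfrac{a^{-n}}{1-a\beta}\equiv\sum_{m=0}^n\beta^{n-m}a^{-m}\pmod{\mathfrak{m}}$ forces $\text{res}\bigl(\tfrac{a^{-n}}{1-a\beta}\bigr)=\beta^n\ne0$ for every $\beta\ne0$.

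Your (ii) as written is false for this $p$: one may adjoin $X$ with $\text{res}(X^{-m})=0$ for \emph{all} $m$ and then embed into an e.c.\ model, which still realizes $p$ since these are atomic conditions. The obstruction does not live in a model realizing $p$; it appears only after a further ultrapower. The paper takes an $\aleph_1$-saturated e.c.\ $K_0$ realizing $p$ at $a$, passes to $K=K_0^{U}$, and forms $a^*=(a^{-n})_n$, which now satisfies $va^*<\Z\cdot va$. If the e.c.\ class were elementary, $K$ would be e.c., the Gauss-extension lemma would give $\beta\in k^\times$ with $\text{res}\bigl(\tfrac{a^*}{1-a\beta}\bigr)=0$, and \L o\'s would produce $n$ and $\beta_0\ne0$ with $\text{res}_0\bigl(\tfrac{a^{-n}}{1-a\beta_0}\bigr)=0$---contradicting $\beta_0^n\ne0$. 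The nonstandard $a^*$ is essential: the hypothesis $v(c)<\Z\cdot v(a)$ of the second lemma cannot be met by any standard power $a^{-n}$, so no single model realizing $p$ exhibits the clash. None of this---the two extension constructions, the valuation hypothesis $v(c)<\Z\cdot v(a)$, the geometric-series identity, or the ultrapower manufacturing $a^*$---is present in your outline, and these are the whole argument.
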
  
This should be contrasted with the fact (due to A. Robinson) that the theory VF of valued fields admits a model companion, namely ACVF, the theory of algebraically closed valued fields. The theory $\text{VF}_{\iota}$ of valued fields with a lift of the residue field still admits a model companion, namely the theory $\text{ACVF}_{\iota}$ described by Hrushovski-Kazhdan \S 6 \cite{HK} (there it is called $T_{loc}$). Indeed, they prove a quantifier-elimination result for $\text{ACVF}_{\iota}$, which in fact---according to \S 6.1 \cite{HK}---goes back to F. Delon. 

We also prove: 
\begin{Theore}
Let $k$ be an infinite field. Then $k(\!(t)\!)$ is undecidable in $L_{\text{res}}$.
\end{Theore}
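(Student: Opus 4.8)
The plan is to prove the theorem by \emph{interpreting arithmetic}: I would exhibit, inside $k(\!(t)\!)$ and with parameters, a definable copy of $(\N;+,\cdot)$ (equivalently, of Robinson's $Q$, or of $(\Z;+,\cdot)$), from which undecidability follows by the essential undecidability of arithmetic. Since the case of \emph{finite} $k$ is already covered by Becker--Denef--Lipschitz, the point is to carry out such an interpretation when $k$ is infinite, in which regime $\text{res}$ is no longer a priori definable but is part of $L_{\text{res}}$.

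First I would recover the valued-field structure. The valuation ring $\Oo_K=k[\![t]\!]$ is $L_{\text{res}}$-definable: for $x\notin\Oo_K$ and any $a\in k^{\times}$ the element $1-ax$ is a unit of $k(\!(t)\!)$ and $\text{res}(x(1-ax)^{-1})=-a^{-1}$, whereas for $x\in\Oo_K$ this identity fails for all $a\in k^{\times}$ except possibly the single value $a=\text{res}(x)^{-1}$; since $k$ is infinite (here one already uses $|k|\ge 3$) this gives a defining formula $\Oo_K=\{x:\exists a\in k^{\times}\ \text{res}(x(1-ax)^{-1})\ne -a^{-1}\}$. From $\Oo_K$ one obtains $\fm$, the value group $\Gamma\cong\Z$ with its ordering, the residue field $k=\{x:\text{res}(x)=x\}$ sitting inside $k(\!(t)\!)$, a uniformizer, and — using that $\text{res}$ is $k$-linear — the coefficient functionals $f\mapsto c_n(f):=\text{res}(t^{-n}f)$ for each \emph{fixed} $n$.

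The heart of the argument, and the step I expect to be hardest, is to make coefficient extraction \emph{uniform} in $n$ — equivalently, to define the set $P=\{t^{n}:n\in\Z\}$ (the image of a cross-section $\Gamma\to k(\!(t)\!)^{\times}$) and then to equip the copy $\Z\cong\Gamma$ of the integers with multiplication. Addition comes for free, since $t^{n}t^{m}=t^{n+m}$. For multiplication I would use the generating series $g_m:=(1-t^{m})^{-1}=\sum_{n\ge 0}t^{nm}$, definable once $t^{m}\in P$ is available: the relation ``$m\mid\ell$'' is expressed by $\text{res}(t^{-\ell}g_m)\ne 0$, and from this, via a least‑element argument, one recovers the ``division by $m$'' graph $\{(t^{jm},t^{j}):j\ge 0\}$ and hence the graph of $\cdot$ on $P$. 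What makes this delicate is the apparent circularity — pinning down $P$ seems to presuppose exactly the uniform coefficient extraction one is trying to build — and this is precisely where infiniteness of $k$ is essential. The key lemma would be that $P$ is the unique multiplicative subgroup $H$ of $k(\!(t)\!)^{\times}$ with $t\in H$ such that $\text{res}(xy)=0$ or $xy=1$ for all $x,y\in H$: testing $x=u$, $y=1$ shows such an $H$ contains no unit of $\Oo_K$ other than $1$, so, being a group through $t$, it equals $\{t^{n}\}$; and membership $z\in P$ is then decided by running this test against the infinitely many elements $z^{i}t^{j}$, the real difficulty being to organise this so that it stays first-order.

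Finally, in characteristic $0$ there is a more transparent route to a definable copy of the integers, and this is what yields the corollary on $(\Cc(\!(t)\!),\text{Res}_0)$: once the formal derivative $f\mapsto f'$ is shown $L_{\text{res}}$-definable (it is the unique derivation $D$ with $D(t)=1$ and $D(\Oo_K)\subseteq\Oo_K$, which can be phrased via the perfect pairing $(\phi,\psi)\mapsto\text{Res}_0(\phi\psi)$, where $\text{Res}_0(g)=\text{res}(tg)$), the argument principle $\text{Res}_0(f'f^{-1})=\mathrm{ord}_0(f)\cdot 1_k$ exhibits $\Z$ as a definable subset of $k\subseteq k(\!(t)\!)$, on which $+$ and $\cdot$ are simply the field operations of $k$, giving $(\Z;+,\cdot)$ at once. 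In positive characteristic $\Z$ does not embed in $k$, so one keeps $\Z$ as the value-group imaginary and installs multiplication on it by the $g_m$-device above; the subcase of $k$ infinite and algebraic over $\Ff_p$ (e.g. $k=\overline{\Ff_p}$) looks the most delicate and would rely on the torsion-freeness of $1+\fm$. In every case one ends with an interpretation of arithmetic, whence $k(\!(t)\!)$ is undecidable in $L_{\text{res}}$.
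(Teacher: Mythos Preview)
Your proposal has a genuine gap at precisely the point you flag as hardest: defining the cross-section $P=\{t^{n}:n\in\Z\}$. The ``key lemma'' you state is correct as a characterisation of $P$ among multiplicative subgroups containing $t$, but the passage to a first-order definition does not go through. Testing membership of $z$ by running the residue condition against all $z^{i}t^{j}$ requires quantifying over $(i,j)\in\Z^{2}$, i.e.\ over the group $\langle z,t\rangle$, which is not uniformly definable; you acknowledge this difficulty but do not resolve it, and I do not see a way to. The characteristic~$0$ route also has a gap: your uniqueness claim for the formal derivative fails whenever $k$ carries a nontrivial derivation $\partial$ (extend $\partial$ to $k(\!(t)\!)$ coefficientwise with $\partial(t)=0$; then $d/dt+\partial$ also satisfies $D(t)=1$ and $D(\Oo_K)\subseteq\Oo_K$), and in any case ``there is a derivation $D$ with \dots'' is a second-order quantifier. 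The residue pairing remark does not repair this, since the integration-by-parts identity $\text{Res}_0(f'g)=-\text{Res}_0(fg')$ is circular. (A small aside: $L_{\text{res}}$ already contains $L_{\text{val}}$, so your opening step defining $\Oo_K$ is unnecessary, though your formula is correct.)

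The paper's argument avoids cross-sections and derivatives entirely. The pivot is the identity
\[
\text{res}\!\left(\frac{a}{1-ty}\right)=p_a(y)\qquad(y\in k),
\]
where $p_a(X)=\sum_{n\ge 0}c_{-n}X^{n}$ records the non-positive part of $a=\sum c_q t^{q}$. Since $k$ is infinite, equality of polynomials is detected by equality of the associated functions $k\to k$, so Hilbert's tenth problem over $k[T]$ (unsolvable by Denef) becomes an $\exists\forall$-statement about $(k(\!(t)\!),\text{res})$: a system $f_1=\cdots=f_n=0$ over $\Z[T]$ has a solution in $k[T]$ iff there exist $a_1,\dots,a_m$ such that for all $y\in k$ one has $f_i(p_{a_1}(y),\dots,p_{a_m}(y),y)=0$. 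A parameter-free variant observes that the sets $S_{a,b}=\{\beta\in k:\text{res}(a/(1-b\beta))=0\}$ (for $b\in\fm$) are either finite or all of $k$, and that every finite subset of $k$ arises; this interprets the weak monadic second-order theory of the infinite field $k$, which in turn interprets $(\N,+,\cdot)$. Either way, the argument stays first-order without ever needing $\{t^{n}\}$ or the derivative.
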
 
Here $L_{\text{res}}$ is the three-sorted language $L_{\text{val}}$  of valued fields together with a function symbol for $\text{res}:k(\!(t)\!)\to k$. In fact, we prove that the $\exists \forall$-theory is undecidable---at least when $t$ is added in the language---and also that $(\N,+,\cdot)$ is interpretable (without parameters). 
The proof also applies to the Hahn series field $k(\!(t^{\Gamma})\!)$, where $\Gamma$ is any non-trivial ordered abelian group, and also to the Puiseux series field $k\{\!\{t\}\!\}=\bigcup_{n\in \N} k(\!(t^{1/n})\!)$. As an application, we get that $\Cc(\!(t)\!)$ and $\Cc\{\!\{t\}\!\}$ are undecidable in the language of valued fields together with a function symbol for $\text{Res}_0:f\mapsto \text{Res}_0(f)$ which maps $f$ to its complex residue at $0$ (see Corollary \ref{complexnumbers}). 
\section{Valued Fields with a total residue map}
We study valued fields of equal characteristic, together with a lift $\iota:k\to K$ and a \textit{total} residue map $\text{res}:K\to k$, which is a linear form extending the residue map $\pi:\Oo_K\to k$. To simplify notation, we identify $x\in k$ with its image $\iota(x)$ in $K$. It will always be clear from context where such an $x$ lives.
\subsection{Axiomatization of $\text{VF}_{\text{res},\iota}$}
Let $L_{\text{val}}$ be the three-sorted language of valued fields with sorts for the field, the value group and the residue field and a function symbol for $v:K\to \Gamma\cup \{\infty\}$. We call $L_{\text{res},\iota}$ the enrichment of $L_{\text{val}}$ which includes function symbols for $\iota$ and $\text{res}$. Consider the following set of axioms in $L_{\text{res},\iota}$:
\begin{enumerate}
\item $(K,v)$ is a valued field of equal characteristic and $\iota:k\to K$ is a field embedding such that $\pi \circ \iota= \text{id}_k$.
\item We have that $\text{res}|_{\Oo_K}=\pi$.
\item $\text{res}: K\to k$ is $k$-linear, i.e. $\text{res}(\lambda a+\mu b)=\lambda \text{res}(a)+\mu r(b)$, for all $\lambda,\mu\in k$.
\end{enumerate} 
Let $\text{VF}_{\text{res},\iota}$ be the $L_{\text{res},\iota}$-theory generated by the above axioms. 
\begin{example} \label{basic}
Let $k$ be a field.
\begin{enumerate}[label=(\roman*)]

\item Let $\Gamma$ be an ordered abelian group and $k(\!(t^{\Gamma})\!)$ be the Hahn series field over $k$ with value group $\Gamma$. We have $(k(\!(t^{\Gamma})\!),v_t,\text{res},\iota)\models \text{VF}_{\text{res},\iota}$, where 
$$\text{res}:\sum_{q\in \Gamma } c_q t^q \mapsto c_0$$ 
and $\iota:k\to k(\!(t^{\Gamma})\!)$ is the obvious lift. 
\item  Similarly, $(k\{\!\{t\}\!\},v_t,\text{res},\iota)\models \text{VF}_{\text{res},\iota}$, where $k\{\!\{t\}\!\}=\bigcup_{n\in \N} k(\!(t^{1/n})\!)$ is the Puiseux series field over $k$.
\end{enumerate}

\end{example}

\subsection{Extensions of $L_{\text{res},\iota}$-structures}

\bl \label{minlinindep}
Let $(K,v,\iota)\subseteq (K',v',\iota')$ be an extension of valued fields with lifts of their residue fields. If $\beta_1,...,\beta_n\in k'$ are $k$-linearly independent and $a_1,...,a_n\in K$, then 
$$v'( \sum_{i=1}^n \beta_i a_i)=\min_{1\leq i\leq n} v(a_i)$$
\el 
\begin{proof}
We may assume that $v(a_1)=...=v(a_n)$. For each $1\leq i\leq n$, there exist $\alpha_i\in k$ and $\varepsilon_i \in \mathfrak{m}$, such that $a_i=\alpha_i \cdot a_1+\varepsilon_i\cdot a_1$. It follows that 
$$ \sum_{i=1}^n \beta_i a_i= a_1\cdot (\sum_{i=1}^n \alpha_i \beta_i  + \sum_{i=1}^n \beta_i\varepsilon_i) $$
Since the $\beta_i$'s are $k$-linearly independent, we get that $\sum_{i=1}^n \alpha_i \beta_i \neq 0$ and hence $v'(\sum_{i=1}^n \alpha_i \beta_i)=0$. Since $v'(\sum_{i=1}^n \beta_i\varepsilon_i)>0$, we get that 
$$v'(\beta_1a_1+...+\beta_n a_n)=v(a_1)+v(\sum_{i=1}^n \alpha_i \beta_i  + \sum_{i=1}^n \beta_i\varepsilon_i)=v(a_1) $$
as needed.
\end{proof}
Given $(K,v,\iota)\subseteq (K',v',\iota')$, we denote by $\langle K \rangle_{k'}$ the $k'$-linear span of $K$ inside $K'$. Note that $\langle K \rangle_{k'}$ is isomorphic to $K\otimes_k k'$ as an $k'$-vector space.
\bl \label{obviousintersection}
Let $(K,v,\iota)\subseteq (K',v',\iota')$ be an extension of valued fields with lifts of their residue fields. Then $\Oo_{K'}\cap \langle K \rangle_{k'} =\langle \Oo_K \rangle_{k'} $. 
\el 
\begin{proof}
First, we prove the following:\\
\textbf{Claim: } Let $a_1,...,a_n\in K$ be $k$-linearly independent over $\Oo_K$. Then $a_1,...,a_n$ are also $l$-linearly independent over $\Oo_{K'}$. 
\begin{proof}
Suppose $\beta_1',...,\beta_n'\in k'$ are such that  
$$\sum_{i=1}^n \beta_i' a_i\in \Oo_{K'}$$
Let $\{\beta_1,...,\beta_m\}$ be a $k$-linear basis of $\langle \beta_1',...,\beta_n'\rangle_k$ and write $\beta_i'= \sum_{j=1}^m c_{ij} \cdot \beta_i$ with $c_{ij}\in k$. We will then have that
$$\sum_{i=1}^n \beta_i' a_i = \sum_{i=1}^n (\beta_i \sum_{j=1}^m c_{ij} a_j) $$
Since the $\beta_i$'s are $k$-linearly independent, Lemma \ref{minlinindep} implies that
$$v'(\sum_{i=1}^n (\beta_i \sum_{j=1}^m c_{ij} a_j) ))=\min_{1\leq i\leq n} \{ v( \sum_{j=1}^m c_{ij} a_j)\}$$
For $i=1,...,n$, it follows that
$$\sum_{j=1}^m c_{ij} a_j\in \Oo_K$$
Since the $a_i$'s are $k$-linearly independent over $\Oo_K$, we get that $ c_{ij}=0$ for all $i,j$. Therefore $\beta_i'=0$ for all $i$ and the $a_i$'s are $k'$-linearly independent over $\Oo_{K'}$. 
\qedhere $_{\textit{Claim}}$ \end{proof}
Now let $V$ be a complement of the $k$-vector subspace $\Oo_K\subseteq K$, i.e., we have $K=\Oo_K\oplus V$. We will then have that $\langle K \rangle_{k'}= \langle \Oo_K \rangle_{k'} + V_{k'}$. By the claim, we get that $ V_{k'} \cap \Oo_{K'}=\{0\}$ and since $\langle \Oo_K \rangle_{k'}\subseteq \Oo_{K'}$, we conclude that $\Oo_{K'}\cap \langle K \rangle_{k'} =\langle \Oo_K \rangle_{k'} $.
\end{proof}

\bl \label{elementarylinearalgebra2}
Let $(K,v,\text{res},\iota)\models \text{VF}_{\text{res},\iota}$ and $(K,v,\iota)\subseteq (K',v',\iota')$. Let $\{e_i:i\in I\}\subseteq L$ be  $k'$-linearly independent over $\Oo_{K'}+\langle K \rangle_{k'}$. Then there exists an $k'$-linear map $\text{res}':K'\to k'$ extending $\pi_{K'}: \Oo_{K'}\to k'$ with  $\text{res}'(e_i)=0$ for all $i\in I$ and such that $(K',v',\text{res}',\iota')$ is a model of $\text{VF}_{\text{res},\iota}$ extending $(K,v,\text{res},\iota)$.
\el 
\begin{proof}
First we extend $\text{res}:K\to k$ to $\text{res}':\langle K \rangle_{k'} \to k'$ by extension of scalars:
$$\text{res}'(\beta\cdot a)= \beta \cdot \text{res}(a)$$ 
for $\beta \in k'$ and $a\in K$. 
By Lemma \ref{obviousintersection}, if $b\in \Oo_{K'}\cap \langle K \rangle_{k'} $, we may write $b=\sum_{i=1}^n \beta_i a_i$, with $\beta_i\in k'$ and $a_i\in \Oo_K$. We now compute that 
$$\text{res}'(b)=\sum_{i=1}^n \beta_i\cdot \text{res}'(a_i)= \sum_{i=1}^n \beta_i\cdot \text{res}(a_i)=\sum_{i=1}^n \beta_i\cdot \pi_K(a_i)=\pi_L(b)$$
We may therefore extend $\text{res}:K\to k$ to $\text{res}':\Oo_{K'}+\langle K \rangle_{k'} \to k'$ so that it restricts to $\text{res}$ on $K$ and also to $\pi_{K'}$ on $\Oo_{K'}$. Finally, we extend the linear map $\text{res}'$ to $K'$ by requiring that $\text{res}(e_i)= 0$ and get a model $(K',v',\text{res}',\iota')$ of $\text{VF}_{\text{res},\iota}$ extending $(K,v,\text{res},\iota)$, as required.
\end{proof}
\subsection{Structures on the rational function field}

\subsubsection{Extension by an infinitesimal}

\begin{fact} [Corollary 2.2.3 \cite{engprest}] \label{engprest1}
Let $(K,v)$ be a valued field with value group $\Gamma$ and residue field $k$. Let $\Gamma'$ be an ordered abelian group extending $\Gamma$ and $\gamma\in \Gamma'$ be torsion-free over $\Gamma$, i.e., if $n\cdot \gamma \in \Gamma$, then $n=0$. Then there is exactly one valuation $w$ on $K(X)$ extending $v$ with $w(X)=\gamma$. We have $\Gamma_{K(X)}=\Gamma \oplus \Z \gamma$, with the ordering induced by $\Gamma'$, and $k_{K(X)}=k$. 
\end{fact}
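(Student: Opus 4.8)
The plan is to build $w$ explicitly by a Gauss-type formula and then deduce every clause of the statement from the single fact that, because $\gamma$ is torsion-free over $\Gamma$, a certain minimum is always attained uniquely. Concretely, for a nonzero $f=\sum_{i=0}^{n}a_iX^i\in K[X]$ I would set
$$w(f):=\min\{\,v(a_i)+i\gamma\ :\ 0\le i\le n,\ a_i\neq 0\,\}\ \in\ \Gamma',$$
and extend to $K(X)^{\times}$ by $w(f/g):=w(f)-w(g)$. The one place the hypothesis on $\gamma$ is used: for $i\neq j$ we cannot have $v(a_i)+i\gamma=v(a_j)+j\gamma$, as this would give $(i-j)\gamma\in\Gamma$, hence $i=j$; so distinct monomials of $f$ carry distinct $w$-values and the minimum defining $w(f)$ is attained at a \emph{unique} index.

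Next I would check that $w$ is a valuation. The ultrametric inequality on $K[X]$ is read off the formula, and since monomials have pairwise distinct values it sharpens to $w(\sum_i a_iX^i)=\min_i w(a_iX^i)$. Multiplicativity $w(fg)=w(f)+w(g)$ is the Gauss-lemma step and is the only delicate point: if the minima for $f=\sum a_iX^i$ and $g=\sum b_jX^j$ occur at $i_0$ and $j_0$, then in the coefficient $\sum_{i+j=i_0+j_0}a_ib_j$ of $X^{i_0+j_0}$ every term other than $a_{i_0}b_{j_0}$ has strictly larger $v$-value (add the strict inequalities $v(a_{i_0+s})+s\gamma>v(a_{i_0})$ and $v(b_{j_0-s})-s\gamma>v(b_{j_0})$, valid for $s\neq 0$), so no cancellation occurs; together with the evident bound $v(a_ib_jX^k)\ge w(f)+w(g)$ on every coefficient of $fg$ this yields the equality. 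Consequently $w(f/g)$ is independent of the chosen representation, $w$ is multiplicative on $K(X)^{\times}$, and the ultrametric law extends to $K(X)$ by clearing denominators; and obviously $w|_{K}=v$, $w(X)=\gamma$.

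For the value group, $w(K(X)^{\times})$ contains $\Gamma$ and $\gamma$, hence $\Gamma+\Z\gamma$, which equals $\Gamma\oplus\Z\gamma$ exactly by torsion-freeness; conversely every $w(f)$ lies in $\Gamma+\Z\gamma$ and every $w(f/g)$ is a difference of two such, so $\Gamma_{K(X)}=\Gamma\oplus\Z\gamma$. For the residue field, given $h\in K(X)$ with $w(h)=0$, write $h=f/g$ and divide numerator and denominator by a monomial $cX^m$ (with $v(c)\in\Gamma$, $m\in\Z$) so that each acquires $w$-value $0$; this reduces matters to a Laurent polynomial $F=\sum_i a_iX^i$ with $w(F)=0$, where the unique minimum $v(a_{i_0})+i_0\gamma=0$ forces (as $i_0\gamma\in\Gamma$) $i_0=0$ and $v(a_0)=0$, while all other monomials have positive value; hence $F$ reduces to $\overline{a_0}\in k$. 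Thus the residue of $h$ lies in $k$, the natural embedding $k\hookrightarrow k_{K(X)}$ is onto, and $k_{K(X)}=k$. Finally, uniqueness: any valuation $w'$ on $K(X)$ extending $v$ with $w'(X)=\gamma$ satisfies $w'(a_iX^i)=v(a_i)+i\gamma$, and since these are pairwise distinct the ultrametric law gives $w'(\sum_i a_iX^i)=\min_i(v(a_i)+i\gamma)=w(f)$, so $w'=w$ on $K[X]$ and hence on $K(X)$.

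The only step I expect to require care is the no-cancellation argument inside multiplicativity; but even that becomes pure bookkeeping once one has recorded that both minima are attained uniquely — which is precisely what torsion-freeness of $\gamma$ over $\Gamma$ buys. The value-group, residue-field, and uniqueness claims are then formal consequences of that same uniqueness.
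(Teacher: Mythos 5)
Your proof is correct: the Gauss-type formula $w(\sum a_iX^i)=\min_i(v(a_i)+i\gamma)$, the observation that torsion-freeness of $\gamma$ over $\Gamma$ makes the minimizing index unique (which drives the no-cancellation step in multiplicativity, the computation of the residue field, and the uniqueness of $w$), and the identification $\Gamma_{K(X)}=\Gamma\oplus\Z\gamma$ are all sound. The paper does not prove this statement but quotes it as Corollary 2.2.3 of Engler--Prestel, and your argument is essentially the standard one given there, so there is nothing further to compare.
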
 

\begin{rem}\label{lift}
\begin{enumerate}[label=(\roman*)]
\item It follows that there is a unique valuation $w$ on $K(X)$ such that $wx>\Gamma$. We will have $k_{K(X)}=k$ and $\Gamma_{K(X)}= \Z\gamma \oplus_{lex}\Gamma$.

\item A lift $\iota:k \to K$ of the valued field $(K,v)$ naturally induces a lift of $(K(X),w)$, namely $\iota_{K(X)}:k\to K(X):\alpha \mapsto \iota(\alpha)$.

\end{enumerate}
\end{rem}

\bl \label{funcfieldstruct}
Let $(K,v,\text{res},\iota)\models \text{VF}_{\text{res},\iota}$. Let $w$ be the unique valuation on $K(X)$ such that $w(X)>\Gamma$ and also let $\iota_{K(X)}:k\to K(X)$ be the induced lift. Then there is a map $\text{res}_{K(X)}:K(X)\to k$ with $\text{res}_{K(X)}(X^{-m})=0$ for all $m\in \N$, such that $(K(X);w,\text{res}_{K(X)},\iota_{K(X)})$ is a model of $\text{VF}_{\text{res},\iota}$ extending $(K,v,\text{res},\iota)$.
\el 
\begin{proof}
By Lemma \ref{elementarylinearalgebra2}, it suffices to show that the elements $X^{-1},X^{-2},...,X^{-n}$ are $k$-linearly independent over $K+\Oo_{K(X)}$. Indeed, for any $c \in K$ and $\beta_1,...,\beta_n\in k$ with $\beta_n\neq 0$, we get that 
$$w(\sum_{i=1}^n \beta_i X^{-i}+c )=-n\cdot w(X)<0$$
using that $w(X)>\Gamma$.
\end{proof}

\bc \label{lemmapowers}
Let $(K,v,\text{res},\iota)$ be an e.c. model of $\text{VF}_{\text{res},\iota}$ and $n\in \N$. Then there exists $a\in \mathfrak{m} - \{0\}$ such that $\text{res}(a^{-m})=0$ for $m=1,...,n$.
\ec
\begin{proof}
Immediate from Lemma \ref{funcfieldstruct}.
\end{proof}

\subsubsection{Gauss valuation}
The valuation described below is known as the \textit{Gauss extension} of $v$ from $K$ to $K(X)$.
\begin{fact} [Corollary 2.2.2 \cite{engprest}] \label{engprest2}
Let $(K,v)$ be a valued field with value group $\Gamma$ and residue field $k$. There exists a unique valuation $w$ on $K(X)$ extending $v$  such that $w(X)=0$ and the residue $x$ of $X$ is transcendental over $k$. This valuation is defined by the formula
$$w(a_0+a_1X+....+a_nX^n)=\min_{1\leq i\leq n} v(a_i)$$
for $a_i\in K$. We have $k_{K(X)}=k(x)$ and $\Gamma_{K(X)}=\Gamma$.
\end{fact}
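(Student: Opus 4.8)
The plan is to prove existence by exhibiting the stated formula as a valuation and uniqueness by comparing any competitor to it directly. First I would define $w$ on the polynomial ring $K[X]$ by $w(\sum_i a_iX^i)=\min_i v(a_i)$ (and $w(0)=\infty$). Coefficientwise, $v(a_i+b_i)\geq \min(v(a_i),v(b_i))$ gives $w(f+g)\geq \min(w(f),w(g))$ immediately; the real content is multiplicativity $w(fg)=w(f)+w(g)$, which is the classical Gauss-lemma computation. Writing $f=\sum a_iX^i$, $g=\sum b_jX^j$, let $i_0$ be the \emph{least} index with $v(a_{i_0})=w(f)$ and $j_0$ the least with $v(b_{j_0})=w(g)$; the coefficient of $X^{i_0+j_0}$ in $fg$ is $a_{i_0}b_{j_0}$ plus cross terms in which either $i<i_0$ (so $v(a_i)>v(a_{i_0})$) or $i>i_0$, forcing $j<j_0$ (so $v(b_j)>v(b_{j_0})$); hence each cross term has value $>w(f)+w(g)$, so that coefficient has value exactly $w(f)+w(g)$, while every coefficient of $fg$ has value $\geq w(f)+w(g)$. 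Thus $w$ is a valuation on $K[X]$, extending uniquely to $K(X)$ via $w(f/g)=w(f)-w(g)$; by construction $w|_K=v$ and $w(X)=0$.

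Next I would read off $\Gamma_{K(X)}=\Gamma$ directly from the formula (every value $w(f)-w(g)$ lies in $\Gamma$, and $\Gamma$ is clearly contained). For the residue field, given $h=f/g\in\Oo_w$ I would scale by a constant $c\in K^\times$ with $v(c)=-w(g)$, so that $cg\in\Oo_K[X]$ has a unit coefficient and $cf\in\Oo_K[X]$ (as $w(cf)=w(h)\geq 0$); reducing modulo $\mathfrak{m}$ shows the residue of $h$ equals $\overline{cf}(x)/\overline{cg}(x)\in k(x)$, where $x$ is the residue of $X$ and $\overline{cg}\neq 0$. Hence $k_{K(X)}=k(x)$. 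Transcendence of $x$ over $k$ follows because any nontrivial algebraic relation $\sum_i \bar a_i x^i=0$ ($\bar a_i\in k$, not all $0$), lifted to $a_i\in\Oo_K$ not all in $\mathfrak{m}$, yields $f=\sum a_iX^i$ with $w(f)=0$ — a $w$-unit whose residue $\sum_i \bar a_i x^i$ nonetheless vanishes, which is absurd.

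For uniqueness, let $w'$ be another valuation on $K(X)$ extending $v$ with $w'(X)=0$ and the residue of $X$ transcendental over $k$. Writing $f=\sum a_iX^i$ as $c\cdot h$ with $v(c)=\min_i v(a_i)$ and $h\in\Oo_K[X]$ carrying a unit coefficient, one has $w'(f)=v(c)+w'(h)$ with $w'(h)\geq 0$; were $w'(h)>0$, the reduction $\bar h\in k[Y]$ — a nonzero polynomial — would vanish at the transcendental residue of $X$, which is impossible, so $w'(h)=0$ and $w'(f)=\min_i v(a_i)=w(f)$. Passing to quotients gives $w'=w$. The one genuinely computational step is the multiplicativity check in the first paragraph; the remaining arguments are routine bookkeeping with reductions modulo $\mathfrak{m}$.
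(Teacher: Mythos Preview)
Your argument is correct and is the standard proof of the Gauss extension (existence via the Gauss-lemma multiplicativity check, residue field and value group read off by scaling into $\Oo_K[X]$ and reducing, uniqueness by the transcendence hypothesis forcing $w'(h)=0$ on primitive polynomials). The paper itself gives no proof of this statement: it is recorded as a \emph{Fact} with a citation to Engler--Prestel, Corollary~2.2.2, so there is nothing to compare against beyond noting that your write-up is essentially the argument one finds in that reference.
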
 
\begin{rem} \label{descriptionofgauss}
A lift $\iota:k \to K$ automatically induces a lift
$$\iota_{K(X)}:k(x)\to K(X):f(x)\mapsto \iota(f)(X)$$ 
Namely, $\iota_{K(X)}$ agrees with $\iota$ on $K$ and maps $x$ to $X$. Thus, the image of $\iota_{K(X)}$ equals $k(X)$, once again identifying $k$ with its image $\iota(k) \subseteq K$ via $\iota$.
\end{rem}

\bl \label{trickylemma}
Let $(K,v,\text{res},\iota)\models  \text{VF}_{\text{res},\iota}$ and $a,c\in K$ with $a\in \mathfrak{m}-\{0\}$ and $v(c)<\Z v(a)$. Let $w$ be the Gauss extension of $v$ to $K(X)$ and $\iota_{K(X)}:k(x)\to K(X)$ be the induced lift. There is a map $\text{res}_{K(X)}:K(X)\to k(x)$ with $\text{res}_{K(X)}(\frac{c}{1-a X })=0$, such that $(K(X);w,\text{res}_{K(X)},\iota_{K(X)})$ is a model of $\text{VF}_{\text{res},\iota}$ extending $(K,v,\text{res},\iota)$. 
\el 
\begin{proof}
By Lemma \ref{elementarylinearalgebra2}, it suffices to show that $\frac{c}{1-aX}$ is $k(X)$-linearly independent over $\Oo_{K(X)}+\langle K \rangle _{k(X)}$. Note that $\langle K \rangle _{k(X)}= K[X]_{k[X]\backslash \{0\}}$, the latter being the localization of $K[X]$ at $k[X]\backslash \{0\}\subseteq K[X]$. 

Suppose for a contradiction that 
$$\frac{c}{1-aX} + \frac{f(X)}{g(X)}\in \Oo_{K(X)}$$ for some $f(X)\in K[X]$ and $g(X)\in k[X]\backslash \{0\}$. Moreover, choose $f(X)$ and $g(X)$ as above such that $\deg(f)$ is minimum. Note that $f(X)\neq 0$ because $w(\frac{c}{1-aX} )<0$ and hence $\deg(f)$ is well-defined. Write $f(X)=f_0+...+f_nX^n$ and $g(X)=g_0+....+g_nX^n$ with  $f_i\in K$ and $g_i\in k$. Note that 
$$w((1-aX)g(X))=w(1-aX)+w(g(X))=0$$ 
It follows that $w(c g(X)+(1-aX)f(X))\geq 0$. By the definition of $w$, this means that
$$(1)\  v(f_0+cg_0)\geq 0, \ (2)\  v(f_m+cg_m-af_{m-1})\geq 0 \mbox{ and } (3)\ v(af_n)\geq 0 $$ 
for $m=1,...,n$.\\
\textbf{Claim 1: }We have that $v(f_0)<0$.
\begin{proof}
Suppose that $v(f_0)\geq 0$. We then have that 
$$ \frac{c}{1-aX} + \frac{f(X)-f_0}{g(X)}\in \Oo_{K(X)}$$ 
Write $f(X)-f_0=X\cdot \tilde{f}(X)$. If $g_0=0$, then $g(X)=X\tilde{g}(X)$ and therefore  
$$\frac{c}{1-aX} + \frac{\tilde{f}(X)}{\tilde{g}(X)} \in \Oo_{K(X)}$$
Since $\text{deg}(\tilde{f})<\text{deg}(f)$, this contradicts our minimality assumption. Therefore $g_0\neq 0$. But then $v(f_0+cg_0) =vc<0$, which contradicts (1).
\qedhere $_{\textit{Claim 1}}$ \end{proof}
Next, we prove:\\
\textbf{Claim 2: }For each $m=0,...,n$, there exists $k_m\in \N$ such that $v(f_m)=v(c)+k_m \cdot v(a)$.
\begin{proof}
We proceed inductively. For $m=0$: Recall from (1) that $v(f_0+c g_0)\geq 0$. By Claim 1, we have $v(f_0)<0$. Therefore $g_0\neq 0$ and $v(f_0)=v(cg_0)=vc$, which settles the base case. For $m>0$: Recall that we have
$$(2)\ v(f_m+c g_m-af_{m-1})\geq 0$$ 
By our induction hypothesis, we have 
$$v (f_{m-1})=v(c)+k_{m-1}\cdot v(a)$$ 
for some $k_{m-1}\in \N$. Since $vc<\Z va$, this implies that $v(af_{m-1})<0$. If $g_m=0$, we get from (2) that
$$v(f_m)=v(af_{m-1})=v(c)+ (k_{m-1}+1)v(a)$$ 
and we take $k_m=k_{m-1}+1$. Suppose that $g_m\neq 0$. Then $v(c g_m-af_{m-1})=v(cg_m)=v(c)$. By (2), we get that $v(f_m)=v(c)$ and we take $k_m=0$. 
\qedhere $_{\textit{Claim 2}}$ \end{proof}
For $m=n$, we get that $v(f_n)=v(c)+k_n v(a)$, for some $k_n\in \N$. Therefore 
$$v(af_n)=v(c)+(k_n+1)v(a)<0$$ 
because $v(c)<\Z v(a)$. This contradicts (3).
\end{proof}

\bc \label{keylemma}
Let $(K,v,\text{res},\iota)$ be an e.c. model of $\text{VF}_{\text{res},\iota}$. Let $a\in  \mathfrak{m} - \{0\}$ and $c\in K$ be such that $v(c)<\Z v(a)$. Then there exists $\beta \in k^{\times}$ such that 
$$\text{res}(\frac{c}{1-a \beta })=0$$
\ec 
\begin{proof}
Immediate from Lemma \ref{trickylemma}.
\end{proof}

\subsection{Non-existence of a model companion of $\text{VF}_{\text{res},\iota}$}

\subsubsection{Generalities on model companions}
\begin{definition} [Definition 3.2.8 \cite{tentziegler}]
Let $T$ be a theory. A theory $T^*$ is a model companion of $T$ if the following conditions are satisfied:
\begin{enumerate}[label=(\roman*)]
\item Every model of $T$ embeds into a model of $T^*$.
\item Every model of $T^*$ embeds into a model of $T$.
\item $T^*$ is model-complete 
\end{enumerate}
\end{definition}

\begin{fact} [Theorem 3.2.9 \cite{tentziegler}]
A theory $T$ has, up to equivalence, at most one model companion $T^*$.
\end{fact}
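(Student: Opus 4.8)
The plan is to prove uniqueness by showing that any two model companions $T_1^{*}$ and $T_2^{*}$ of $T$ have exactly the same models, and hence are logically equivalent. The engine is a ``sandwich'' construction interleaving the two companions with $T$, combined with Tarski's elementary chain lemma and the characterisation of model-completeness (every embedding between models of a model-complete theory is elementary).

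First I would build the sandwich. Start with an arbitrary $M_0 \models T_1^{*}$. By clause (ii) for $T_1^{*}$, embed $M_0$ into some $N_0 \models T$; by clause (i) for $T_2^{*}$, embed $N_0$ into some $M_1 \models T_2^{*}$; by clause (ii) for $T_2^{*}$, embed $M_1$ into some $N_1 \models T$; by clause (i) for $T_1^{*}$, embed $N_1$ into some $M_2 \models T_1^{*}$; and so on. This produces a countable chain of embeddings
$$M_0 \subseteq N_0 \subseteq M_1 \subseteq N_1 \subseteq M_2 \subseteq N_2 \subseteq \cdots$$
with $M_{2i}\models T_1^{*}$, $M_{2i+1}\models T_2^{*}$ and $N_i\models T$ for all $i$. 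Let $M := \bigcup_i M_i$; since each $N_i$ is contained in $M_{i+1}$ and each $M_i$ in $N_i$, this also equals $\bigcup_i N_i$.

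Next I would pass to the even and odd subchains. The chain $M_0 \subseteq M_2 \subseteq M_4 \subseteq \cdots$ consists of models of $T_1^{*}$, so by model-completeness of $T_1^{*}$ each inclusion in it is \emph{elementary}; hence by the elementary chain lemma $M_0 \preceq M$, and in particular $M \models T_1^{*}$. Symmetrically, $M_1 \subseteq M_3 \subseteq M_5 \subseteq \cdots$ is an elementary chain of models of $T_2^{*}$, so $M_1 \preceq M$ and $M \models T_2^{*}$. Since $M_0 \preceq M$ and $M \models T_2^{*}$, and $T_2^{*}$ is a set of sentences, we conclude $M_0 \models T_2^{*}$. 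Thus every model of $T_1^{*}$ is a model of $T_2^{*}$; running the symmetric argument (starting from an arbitrary model of $T_2^{*}$) shows every model of $T_2^{*}$ is a model of $T_1^{*}$. Hence $T_1^{*}$ and $T_2^{*}$ have the same models and are therefore equivalent.

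The argument is essentially routine, and I would not expect a genuine obstacle. The two points needing a little care are the bookkeeping in the sandwich—at each stage one must invoke the correct clause, (i) or (ii), of the correct companion, so that the two subchains $(M_{2i})_i$ and $(M_{2i+1})_i$ each sit inside a single model-complete theory—and the observation that this is precisely what makes those subchains \emph{elementary} chains, which is what licenses the appeal to the chain lemma and hence the conclusions $M \models T_1^{*}$ and $M \models T_2^{*}$.
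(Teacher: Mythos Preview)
Your argument is correct and is precisely the standard sandwich/interleaving proof of uniqueness of model companions. The paper itself does not prove this statement; it merely records it as a fact with a reference to Tent--Ziegler, so there is no in-paper proof to compare against. Your write-up matches the textbook argument essentially verbatim.
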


\begin{definition}
\begin{enumerate}[label=(\roman*)]
\item Let $M,N$ be $L$-structures. Suppose that $N\models \phi \Rightarrow M\models \phi$ for any existential sentence $ \phi \in L(M)$. Then we say that $M$ is \textit{existentially closed} (or e.c.) in $N$ and write $M\preceq_{\exists} N$.
\item Let $T$ be a theory. A model $M\models T$ is said to be an existentially closed (or e.c.) model of $T$ if $M\preceq_{\exists} N$, for all $N\models T$.
\end{enumerate}
\end{definition}

\begin{fact} [Theorem 3.2.14 \cite{tentziegler}] \label{tentfact}
For any theory $T$, the following are equivalent:
\begin{enumerate}[label=(\roman*)]
\item $T$ has a model companion $T^*$.
\item The e.c. models of $T$ form an elementary class.
Moreover, if $T^*$ exists, then $T^*$ is the theory of e.c. models of $T$.
\end{enumerate}
\end{fact}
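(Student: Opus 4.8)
The plan is to prove both implications by identifying the model companion, whenever it exists, with $T^{\mathrm{ec}}:=\mathrm{Th}(\mathcal E)$, where $\mathcal E$ denotes the class of existentially closed models of $T$; concretely, the target identity is $\mathcal E=\mathrm{Mod}(T^{\ast})$. I will use two standard facts from the basic theory of e.c.\ models, of the kind established just before a statement like this (e.g.\ Tent--Ziegler \S 3.2): \textbf{(E)} every model of $T$ embeds into an e.c.\ model of $T$ (the chain construction that realizes, at each stage, all existential formulas over the current structure that can be realized in \emph{some} extension inside $\mathrm{Mod}(T)$); and \textbf{Robinson's test}: a theory $S$ is model-complete iff every model of $S$ is existentially closed in every model of $S$ extending it. I also use that model-complete theories are preserved under unions of chains, hence are $\forall\exists$-axiomatizable, and that a model companion $T^{\ast}$ of $T$ satisfies $T_{\forall}=(T^{\ast})_{\forall}$ (a universal sentence true in all models of $T^{\ast}$ is true in all models of $T$, since every model of $T$ embeds into one of $T^{\ast}$, and symmetrically).

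\emph{Proof of $(\mathrm{ii})\Rightarrow(\mathrm{i})$.} Suppose $\mathcal E$ is elementary, so $\mathcal E=\mathrm{Mod}(T^{\ast})$ with $T^{\ast}:=T^{\mathrm{ec}}$. I verify that $T^{\ast}$ is a model companion of $T$. Every model of $T$ embeds into a model of $T^{\ast}$: by \textbf{(E)} it embeds into an e.c.\ model of $T$, which lies in $\mathcal E=\mathrm{Mod}(T^{\ast})$. Every model of $T^{\ast}$ embeds into a model of $T$: such a model is an e.c.\ model of $T$, hence is itself a model of $T$. $T^{\ast}$ is model-complete: if $M\subseteq N$ with $M,N\models T^{\ast}$, then both are e.c.\ models of $T$; in particular $N\models T$, and since $M$ (being an e.c.\ model of $T$) is existentially closed in \emph{every} model of $T$ extending it, $M\preceq_{\exists}N$; Robinson's test gives model-completeness. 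So $T^{\ast}$ is a model companion of $T$, equal by construction to $\mathrm{Th}(\mathcal E)$.

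\emph{Proof of $(\mathrm{i})\Rightarrow(\mathrm{ii})$.} Suppose $T^{\ast}$ is a model companion of $T$; it suffices to show $\mathcal E=\mathrm{Mod}(T^{\ast})$, since this makes $\mathcal E$ elementary and then $\mathrm{Mod}(\mathrm{Th}(\mathcal E))=\mathcal E=\mathrm{Mod}(T^{\ast})$, so $T^{\ast}\equiv\mathrm{Th}(\mathcal E)$, which is the ``moreover''. \emph{Inclusion $\mathcal E\subseteq\mathrm{Mod}(T^{\ast})$.} Given $M\in\mathcal E$, build a chain $M=N_{0}\subseteq M_{1}\subseteq N_{1}\subseteq M_{2}\subseteq\cdots$ with each $N_{i}\models T$ and each $M_{i}\models T^{\ast}$, alternately applying the two embedding properties of companionship, and let $M_{\infty}=\bigcup_{i}N_{i}=\bigcup_{i}M_{i}$. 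The $M_{i}$ form an elementary chain by model-completeness, so $M_{\infty}\models T^{\ast}$; and $M\preceq_{\exists}M_{\infty}$, because any existential $L(M)$-sentence holding in $M_{\infty}$ has its witnesses in some $N_{i}$, hence holds in $N_{i}$, hence in $M$ (as $M\in\mathcal E$ is existentially closed in the $T$-model $N_{i}\supseteq M$). Now for each $\forall\exists$-axiom $\forall\bar x\,\exists\bar y\,\theta(\bar x,\bar y)$ of $T^{\ast}$ and each tuple $\bar a$ from $M$, the existential sentence $\exists\bar y\,\theta(\bar a,\bar y)$ holds in $M_{\infty}$, hence in $M$; so $M\models T^{\ast}$. \emph{Inclusion $\mathrm{Mod}(T^{\ast})\subseteq\mathcal E$.} Given $M\models T^{\ast}$: first, $M$ is existentially closed in every $T$-model $N\supseteq M$ — embed $N$ into some $N'\models T^{\ast}$, note $M\preceq N'$ by model-completeness, and transfer existential $L(M)$-sentences up from $N$ to $N'$ and back down to $M$. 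Second, $M\models T$: since $M\models(T^{\ast})_{\forall}=T_{\forall}$, $M$ embeds into a model of $T$ and hence, by \textbf{(E)}, into an e.c.\ model $M'$ of $T$; by the inclusion just proved $M'\models T^{\ast}$, so $M\preceq M'$ by model-completeness, giving $M\equiv M'\models T$. Hence $M\in\mathcal E$.

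The step I expect to be the crux is the model-completeness verification — equivalently, the inclusion $\mathcal E\subseteq\mathrm{Mod}(T^{\ast})$ in $(\mathrm{i})\Rightarrow(\mathrm{ii})$: the substantive point is to promote the bare hypothesis ``$M$ is existentially closed in all $T$-extensions'' into enough elementarity (using the companion's embedding properties, model-completeness of $T^{\ast}$, and the behaviour of model-complete theories under unions of chains) to conclude $M\models T^{\ast}$ and to apply Robinson's test; everything else is routine checking of the three clauses in the definition of a model companion.
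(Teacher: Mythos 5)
The paper does not prove this statement at all --- it is imported as a black-box Fact with a citation to Tent--Ziegler, Theorem 3.2.14 --- and your argument is, up to presentation, exactly the standard proof of that theorem: identify the candidate companion with $\mathrm{Th}(\mathcal E)$, get model-completeness from Robinson's test in one direction, and in the other use the alternating-chain construction together with the $\forall\exists$-axiomatizability of model-complete theories to show $\mathcal E=\mathrm{Mod}(T^*)$; both directions check out. The one point to flag is your prerequisite \textbf{(E)}: for an \emph{arbitrary} theory $T$ the union of the chain you sketch need not be a model of $T$, and indeed \textbf{(E)} and the ``moreover'' clause can fail for non-inductive $T$ (e.g.\ for the theory of linear orders with a greatest element, which has no e.c.\ models yet has DLO as model companion), which is why the general form of the theorem is stated for inductive theories or in terms of e.c.\ models of $T_\forall$. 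This is harmless for the paper, since $\text{VF}_{\text{res},\iota}$ is $\forall\exists$-axiomatized, but you should attach that hypothesis to \textbf{(E)} rather than asserting it for every $T$.
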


\subsubsection{Proof of Theorem \ref{modelcomp}}
\begin{Theor}
The theory $\text{VF}_{\text{res},\iota}$ does \textit{not} admit a model companion.
\end{Theor}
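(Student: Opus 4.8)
The plan is to show that the class of existentially closed (e.c.) models of $\text{VF}_{\text{res},\iota}$ is not elementary, and then invoke Fact \ref{tentfact}. Concretely, I would exhibit an e.c. model $(K,v,\text{res},\iota)$ together with a model $(K',v',\text{res}',\iota') \equiv (K,v,\text{res},\iota)$ which is \emph{not} e.c.; by Fact \ref{tentfact} this rules out a model companion. The natural way to produce such a pair is via a non-principal ultrapower or an elementary extension obtained from compactness, so that the two structures are elementarily equivalent by construction, and then to pin down a first-order consequence of being e.c. that fails in the extension.

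The key is to extract a \emph{uniform} obstruction from Corollaries \ref{lemmapowers} and \ref{keylemma}. These say that in any e.c. model, for each fixed $n$ there is $a \in \fm \setminus\{0\}$ with $\text{res}(a^{-m}) = 0$ for $m = 1,\dots,n$, and that for suitable $c$ there is $\beta \in k^\times$ with $\text{res}\big(\tfrac{c}{1-a\beta}\big) = 0$. Note $\tfrac{c}{1-a\beta} = \sum_{m \geq 0} c\,\beta^m a^m$ as a convergent expansion when $v(a)>0$; the content of Corollary \ref{keylemma} is that the ``tail residues'' $\text{res}(c a^m)$ for large $m$ can be made to cancel the contribution of the principal part. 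The plan is to combine these: I would argue that in an e.c. model one can find, for each $n$, an element $a_n \in \fm\setminus\{0\}$ such that \emph{simultaneously} $\text{res}(a_n^{-m})=0$ for all $m \le n$ and also $\text{res}(a_n^{-m}) = 0$ for \emph{all} $m \ge 1$ fails to be expressible — here is where the non-uniformity bites. The statement ``for all $a \in \fm\setminus\{0\}$ there exists $m \ge 1$ with $\text{res}(a^{-m}) \ne 0$'' is, I expect, true in every model of $\text{VF}_{\text{res},\iota}$ (because $\text{res}$ has image of bounded ``size'' relative to the $k$-linear span generated by negative powers of $a$, which must be infinite-dimensional once $v(a)>0$) but this is \emph{not} first-order: it is a countable conjunction of the existential conditions ``$\exists m\le N\ \text{res}(a^{-m})\ne 0$''. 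So I would take an e.c. model $K$ and form an elementary extension $K^*$ in which there is an element $a$ with $\text{res}((a)^{-m}) = 0$ for all \emph{standard} $m$ (realize the type $\{\text{res}(x^{-m}) = 0 : m \in \N\} \cup \{x \in \fm\setminus\{0\}\}$, which is finitely satisfiable by Corollary \ref{lemmapowers}). Then $K^*$ cannot be e.c.: embed $K^*$ into a further model $K^{**} \models \text{VF}_{\text{res},\iota}$ where the existential $L_{\text{res},\iota}(K^*)$-sentence witnessing $\text{res}(a^{-1}) \ne 0$ — or rather, witnessing that $a^{-1},a^{-2},\dots$ cannot all have zero residue — holds, contradicting $K^* \preceq_\exists K^{**}$. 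The cleanest route is: being e.c. forces, for the specific $a \in K^*$, that some equation built from $a$ and finitely many new variables has no solution, yet this failure of solvability is not preserved, because one can always further extend to kill it.

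More carefully, I would make the obstruction a genuine existential sentence over a parameter. Fix $a \in \fm \setminus\{0\}$. In an e.c. model, I claim there is $n$ and there are elements forcing $\sum_m \text{res}(c a^m)$ to be controlled — better, I would use that $\langle a^{-1}, a^{-2}, \dots \rangle_k$ together with $\Oo_K$ spans a space on which $\text{res}$ is determined by $\pi$ once we know it is $k$-linear, so $\text{res}(a^{-m})$ for $m \ge 1$ is \emph{not} free: if $\text{res}(a^{-1}) = \dots = \text{res}(a^{-(n-1)}) = 0$ then by Lemma \ref{elementarylinearalgebra2}-type reasoning there is no obstruction to $\text{res}(a^{-n}) = 0$ either, but an e.c. model, being e.c., must already \emph{realize} whatever existential statements its extensions satisfy; so if in some extension $\text{res}(a^{-n}) \ne 0$ for the least such available $n$, this propagates back. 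The sentence ``there exists $x \in \fm\setminus\{0\}$ such that $\text{res}(x^{-1}) = 0$'' is existential and, by Corollary \ref{lemmapowers} with $n=1$, holds in every e.c. model; iterating, ``$\exists x\ (x\in\fm\setminus\{0\} \wedge \bigwedge_{m=1}^n \text{res}(x^{-m})=0)$'' holds in every e.c. model for every $n$. If the e.c. models formed an elementary class $T^*$, then $T^*$ proves each of these, hence $T^*$ proves the type $p(x) = \{x \in \fm\setminus\{0\}\} \cup \{\text{res}(x^{-m})=0 : m\ge 1\}$ is consistent, so some model of $T^* = $ (e.c. models of $\text{VF}_{\text{res},\iota}$) realizes $p$. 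But a model realizing $p$ has an element $a$ with $\text{res}(a^{-m})=0$ for \emph{all} $m\ge1$; I then derive a contradiction with being e.c. by producing an extension in $\text{VF}_{\text{res},\iota}$ (using Lemma \ref{elementarylinearalgebra2} in reverse, i.e.\ re-choosing $\text{res}'$ on the span of the $a^{-m}$) in which some existential fact over the parameter $a$ becomes true that was false in the e.c. model — namely that there is $c$ with $v(c) < \Z v(a)$ and yet $\text{res}'$ of the relevant geometric-series element is forced nonzero, contradicting Corollary \ref{keylemma} applied inside the e.c. model.

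The main obstacle I anticipate is making the last contradiction airtight: I must identify a single existential $L_{\text{res},\iota}(a)$-sentence $\varphi$ that (a) is false in the hypothetical e.c. model $M^* \models T^*$ realizing $p$ at $a$, but (b) becomes true in some $\text{VF}_{\text{res},\iota}$-extension $N \supseteq M^*$, thereby contradicting $M^* \preceq_\exists N$. The geometric-series element $\tfrac{c}{1-a\beta}$ from Corollary \ref{keylemma} is the right gadget, because its residue, under any $k$-linear $\text{res}$ extending $\pi$, equals $\pi(c\cdot(\text{unit}))$ plus a correction living in the span of $\{a^{-m}\}\cup\{a^m : m\ge 1\}$ — and if all the negative-power residues vanish while, in an extension, one re-specifies the values on positive powers freely (legitimate by Lemma \ref{elementarylinearalgebra2} since those are $k$-linearly independent over $\Oo + \langle K\rangle$ when $v(a)>0$ is generic), one can arrange $\text{res}'\big(\tfrac{c}{1-a\beta}\big)\ne 0$ for \emph{every} $\beta\in k^\times$, so that the existential sentence ``$\exists\beta\in k^\times\ \text{res}(\tfrac{c}{1-a\beta})=0$'' is false in $N$. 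Wait — that has the quantifiers backwards for a clean contradiction, so the cleaner formulation is: Corollary \ref{keylemma} shows this sentence is true in every e.c. model, hence $T^* \vdash \forall a\,\forall c\,[\,v(c)<\Z v(a) \to \exists\beta\in k^\times\ \text{res}(\tfrac{c}{1-a\beta})=0\,]$; but I exhibit a model of $\text{VF}_{\text{res},\iota}$ (not necessarily e.c.) violating it, while \emph{also} $T^*$ must be the set of e.c.\ models which all satisfy it — no contradiction yet. The genuine tension is between (i) every e.c.\ model satisfies $\text{res}(a^{-m})=0$-realizability for all finite $n$ and the $\beta$-cancellation, and (ii) no single model can satisfy \emph{both} the full type $p(a)$ \emph{and} remain e.c., since realizing $p(a)$ leaves ``no room'' to cancel in Corollary \ref{keylemma} for an appropriate $c$. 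Pinning that down — showing $p(a)$ consistent-with-$T^*$ contradicts Corollary \ref{keylemma} inside that very model — is the crux, and I expect it to occupy the bulk of the real work; everything else (finite satisfiability of $p$, the ultrapower/compactness setup, quoting Fact \ref{tentfact}) is routine.
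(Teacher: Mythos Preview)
Your overall strategy matches the paper's: assume a model companion $T^*$ exists, use Corollary \ref{lemmapowers} plus saturation to realize the type $p(x) = \{x \in \fm \setminus \{0\}\} \cup \{\text{res}(x^{-m}) = 0 : m \geq 1\}$ at some $a$ in an e.c.\ model, then extract a contradiction via Corollary \ref{keylemma}. But you have not closed the argument, and the two routes you sketch toward the contradiction both have real problems.

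First, the ``re-choose $\text{res}'$ in an extension'' idea fails outright: in any $\text{VF}_{\text{res},\iota}$-extension $N \supseteq M^*$, the map $\text{res}_N$ must restrict to $\text{res}_{M^*}$ on $M^*$, so the values $\text{res}(a^{-m}) = 0$ are frozen and cannot be altered. Second, the hypothesis $v(c) < \Z v(a)$ of Corollary \ref{keylemma} is an infinite conjunction, not a first-order condition, so it cannot be folded into a $T^*$-sentence as you attempt; and for an \emph{arbitrary} $c$ satisfying it, the value $\text{res}\big(\tfrac{c}{1-a\beta}\big)$ depends on the unconstrained residues $\text{res}(ca^m)$, so knowing $\text{res}(a^{-m})=0$ for all $m$ yields no contradiction.

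The missing idea is the \emph{specific} choice of $c$. The paper takes an $\aleph_1$-saturated e.c.\ model $K_0$ realizing $p$ at $a$, forms the ultrapower $K = K_0^{\N}/U$ for $U$ non-principal (so $K$ is again e.c.), and sets $c := a^* := \text{ulim}_n\, a^{-n} \in K$. Then $v(a^*) < \Z v(a)$ holds in $K$, so Corollary \ref{keylemma} gives $\beta \in k^\times$ with $\text{res}\big(\tfrac{a^*}{1-a\beta}\big) = 0$. \L o\'s now transfers this back to $K_0$: for some standard $n$ and some $\beta_0 \in k_0^\times$, one has $\text{res}_0\big(\tfrac{a^{-n}}{1-a\beta_0}\big) = 0$. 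But a direct finite computation in $K_0$ gives
\[
\frac{a^{-n}}{1-a\beta_0} \;\equiv\; a^{-n} + a^{-(n-1)}\beta_0 + \cdots + a^{-1}\beta_0^{\,n-1} + \beta_0^{\,n} \pmod{\fm_0},
\]
whence $\text{res}_0\big(\tfrac{a^{-n}}{1-a\beta_0}\big) = \beta_0^{\,n} \neq 0$, using $\text{res}_0(a^{-m}) = 0$ for all $m \leq n$. This is the contradiction. The point you were missing is that $a^*$ is not an arbitrary element with valuation below $\Z v(a)$; it is manufactured precisely so that the existential statement over it unpacks, coordinate by coordinate, into statements about the powers $a^{-n}$ whose residues you already control.
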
  
\begin{proof}
Assume otherwise and let $T$ be the model companion of $\text{VF}_{\text{res},\iota}$. 
By Fact \ref{tentfact}, the theory $T$ is precisely the theory of e.c. models of $\text{VF}_{\text{res},\iota}$. Let $(K_0,v_0,\text{res}_0,\iota_0)\models T$ be $\aleph_1$-saturated. By Corollary \ref{lemmapowers} and $\aleph_1$-saturation, there is $a \in \mathfrak{m}_0-\{0\}$ such that $\text{res}_0(a^{-m})=0$ for $m>0$. Let 
$$(K,v,\text{res},\iota):=(K_0,v_0,\text{res}_0,\iota_0)^U$$ 
where $U$ is a non-principal ultrafilter on $\N$. By \L o\'s' Theorem, we will have that $(K_0,v_0,\text{res}_0,\iota_0)\preceq (K,v,\text{res},\iota)$. In particular, we get that $(K,v,\text{res},\iota)\models T$ and therefore $(K,v,\text{res},\iota)$ is an e.c. model of $\text{VF}_{\text{res},\iota}$. 

Set $a^*:=\text{ulim } a^{-n}$. Since $U$ is non-principal, we get that $va^*<\Z va$. By Corollary \ref{keylemma}, there is $\beta\in k-\{0\}$ such that $\text{res}(\frac{a^*}{1-a \beta })=0$. By \L o\'s' Theorem, we get that 
$$\{n\in \N: K_0\models \exists y \in k^{\times}( \text{res}_0(\frac{a^{-n}}{1-ay})=0)\}\in U$$ 
In particular, there exists $n\in \N$ and $\beta_0\in \iota(k_0)-\{0\}$ such that $\text{res}(\frac{a^{-n}}{1-a \beta_0})=0$. Note that $$\frac{a^{-n}}{1-a \beta_0} \equiv \beta_0^n+a^{-1}\beta_0^{n-1}+...+a^{-n} \mod \mathfrak{m}$$ 
Since $\text{res}:K\to k$ is $k$-linear and $\text{res}(\mathfrak{m})=\{0\}$ and $\text{res}(a^{-m})=0$ for all $m\in \N$, we compute that 
$$\text{res}(\frac{a^{-n}}{1-a \beta_0})=\text{res}(\beta_0^n+a^{-1}\beta_0^{n-1}+...+a^{-n})=\sum_{m=0}^n \beta_i^{m-i} \text{res}(a^{-m})=\beta_0^n $$
This forces $\beta_0=0$, which is a contradiction. It follows that $\text{VF}_{\text{res},\iota}$ does not admit a model companion.
\end{proof}

\section{Undecidability of $k(\!(t^{\Gamma})\!)$ with a total residue map}
Let $k(\!(t^{\Gamma})\!)$ be the Hahn field with residue field $k$ and value group $\Gamma$. Recall that an element $f \in k(\!(t^{\Gamma})\!)$ is of the form 
$$f=\sum_{q\in \Gamma} c_q t^q $$
where $\supp(f)=\{q\in \Gamma: c_q\neq 0\}$ is well-ordered. Throughout, we fix some element $1\in \Gamma^{>0}$, thereby identifying a copy of $\Z$ inside $\Gamma$. We write $t$ for $t^1$.
\subsection{Definability in $k(\!(t^{\Gamma})\!)$ in $L_{\text{res}}$}

\bl \label{resfieldef}
Let $k$ be any field and $\Gamma$ be any ordered abelian group. The subfield $k\subseteq k(\!(t^{\Gamma})\!)$ is $\emptyset$-definable in $L_{\text{res}}$. In particular, the lift $\iota:k\to k(\!(t^{\Gamma})\!)$ is $\emptyset$-definable in $L_{\text{res}}$.
\el 
\begin{proof}
We claim that 
$$k=\{x\in k(\!(t^{\Gamma})\!): \forall y (\text{res}(x\cdot y)=\text{res}(x)\cdot \text{res}(y) ) \}$$ 
The inclusion "$\subseteq$" is clear.
For "$\supseteq$", suppose that $x\notin k$ and write $x= \alpha+ x'$ with $\alpha \in k$, $\text{res}(x')=0$ and $x' \neq 0$. Note that $v(x')\neq 0$. For $q=v(x')$, we compute that
$$\text{res}(x\cdot t^{-q})=\alpha \cdot \text{res}(t^{-q})+\text{res}(x' \cdot t^{-q})=\text{res}(x'\cdot t^{-q})\neq 0 $$ 
On the other hand, we have  $\text{res}(x)\cdot \text{res}(t^{-q})=0$ and hence
$$\text{res}(x\cdot t^{-q}) \neq \text{res}(x)\cdot \text{res}(t^{-q})$$ 
Finally, for $\iota$ simply note that $\iota(\alpha)=a$ if and only  if $a\in \iota(k)$ and $\text{res}(a)=\alpha$.
\end{proof}

\begin{definition}
Given 
$$a=\sum_{q\in A} c_q t^q\in k(\!(t^{\Gamma})\!)$$ 
we define the polynomial $p_a(X)\in k[X]$ given by
$$p_a(X)=\sum_{n\in  \N} c_{-n} X^n$$ 
Note that $p_a(X)$ is indeed a polynomial because $\supp(a) \cap \Z^{\leq 0}$ is finite.
\end{definition}

\bl \label{polynomialpart}
For any $a \in k(\!(t^{\Gamma})\!)$ and $y\in k$, we have
$$\text{res}(\frac{a}{1-ty})=p_a(y)$$ 
\el 
\begin{proof}
Write $a=\sum_{q\in \Gamma} c_q t^q\in k(\!(t^{\Gamma})\!)$. For each $q_0 \in \Gamma$, note that 
$$\text{res}(t^{q_0} \cdot \sum_{q\in A} c_q t^q)=c_{-q_0} $$
We now have 
$$\frac{a}{1-ty} =(1+ty+...+t^ny^n+...) \cdot \sum_{q\in \Gamma} c_q t^q \equiv \sum_{i=1}^n t^iy^i \cdot \sum_{q\in \Gamma} c_q t^q \mod \mathfrak{m} $$
where $n$ is maximum such that $-n \in \text{supp}(a)$. By $k$-linearity of $\text{res}$ and since $\text{res}(\mathfrak{m})=\{0\}$, we get that
$$\text{res}(\frac{a}{1-ty})= \sum_{i=1}^n \text{res}(t^i \cdot  \sum_{q\in \Gamma} c_q t^q) y^i= \sum_{i=1}^n c_{-i}y^i =p_a(y)$$ 
as needed.
\end{proof}

\subsection{Infinite residue field}
Let $k$ be an \textit{infinite} field.
\subsubsection{Undecidability of $k(\!(t^{\Gamma})\!)$ in $L_{\text{res},t}$} \label{undecidabilitysect}
We isolate the following elementary fact from algebra to pinpoint exactly where our proof fails when $k$ is finite:
\bl \label{trivialemma}
Let $k$ be any infinite field and $f(X)\in k[X]$. Then $f(X)\equiv 0$ if and only if $f(k)=\{0\}$.
\el 
\begin{proof}
Any non-zero polynomial over any field has finitely many roots.
\end{proof}

\begin{fact} [Denef] \label{denfact}
Let $R=k[t]$, where $k$ is any field. Then Hilbert's tenth problem over $R$ with coefficients in $\Z[t]$ is unsolvable.
\end{fact}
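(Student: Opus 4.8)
The plan is to reduce the unsolvable Hilbert's tenth problem over $\Z$ --- equivalently, by the Davis--Putnam--Robinson--Matiyasevich theorem, the undecidability of the positive existential theory of $(\N;+,\cdot)$ --- to Hilbert's tenth problem over $R=k[t]$ with parameters in $\Z[t]$. The core task is to exhibit a \emph{Diophantine} (i.e.\ positive existential) interpretation of $(\N;+,\cdot)$ inside $k[t]$ that uses only $t$ and integers as parameters.

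The tool is a Pell equation over $k[t]$. Assume first that $\operatorname{char} k \neq 2$ and consider
$$X^2 - (t^2-1)\,Y^2 = 1.$$
Since $t^2-1$ has distinct roots it is not a square in $k(t)$, so the solutions of this equation in $k[t]$ are precisely the pairs $(X_n,Y_n)$, $n\in\Z$, determined by $X_n + Y_n\sqrt{t^2-1} = (t+\sqrt{t^2-1})^{n}$; one checks that $X_n,Y_n\in\Z[t]$, that $\deg X_n=|n|$ and $\deg Y_n=|n|-1$, and that they obey exactly the addition law, the divisibility relations ($Y_m \mid Y_{n}$ precisely when $m\mid n$), and the congruences modulo $Y_m^{2}$ and modulo $t^2-1$ that the classical integer Pell sequence obeys. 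All of these relations are quantifier-free, hence legitimate inside Diophantine formulas, so one can transport the classical Matiyasevich--Davis--Putnam--Robinson argument to $k[t]$: after encoding $n$ by the parameter $t^{n}$, it shows that the graph of $n\mapsto (X_n,Y_n)$ is Diophantine over $k[t]$ and that addition and multiplication of exponents translate into Diophantine relations among Pell solutions. Concretely, I would Diophantine-define the set $P=\{\, t^{n} : n\in\N \,\}$ together with the ternary relation $R=\{\,(t^{m},t^{n},t^{mn}) : m,n\in\N \,\}$; then multiplication on $P$ realizes addition of exponents and $R$ realizes their multiplication, so $(P;\cdot,R)$ is a Diophantine copy of $(\N;+,\cdot)$.

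Granting the interpretation, the conclusion is routine: any Diophantine problem over $\N$ translates, variable by variable and equation by equation, into an equivalent finite system of polynomial equations over $k[t]$ all of whose coefficients lie in $\Z[t]$ (the parameters being powers of $t$ and integers), so a decision procedure for solvability of such systems in $k[t]$ would decide the positive existential theory of $(\N;+,\cdot)$ --- impossible. Note that nothing in this argument uses more than the ring structure of $k[t]$.

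The step I expect to be the real obstacle is forcing the encoded quantity to be an honest natural number rather than merely its residue modulo $\operatorname{char} k$: over $\Z$ one separates solutions using size inequalities, which are unavailable here, so one must instead exploit the \emph{degree} ($\deg Y_n = n-1$ for $n\geq 1$) and express it Diophantine-ly, for instance by carrying a power of $t$ alongside each Pell solution and tying the two together through divisibility among powers of $t$. Verifying that every Pell identity genuinely holds in $k[t]$ in all characteristics --- and, in characteristic $2$, replacing the degenerate equation $X^2-(t^2-1)Y^2=1$ by the norm form of a separable Artin--Schreier extension $k[t][\theta]$ with $\theta^2+\theta=f(t)$ for a suitable $f$ and rerunning the same scheme --- is where essentially all of the work lies; the remainder is bookkeeping.
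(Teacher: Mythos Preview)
The paper does not supply a proof of this fact at all; it simply records it as a theorem of Denef and refers the reader to \cite{DenefPoly1} and \cite{DenefPoly2}. So there is no argument in the paper to compare yours against.

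That said, your outline is essentially Denef's strategy. The Pell equation $X^{2}-(t^{2}-1)Y^{2}=1$ over $k[t]$ (in characteristic $\neq 2$) and the cyclic structure of its solution set are exactly the engine he uses, and you correctly flag that characteristic $2$ requires a separable quadratic replacement. One point of divergence worth noting: in characteristic $0$ Denef does not encode $n$ as $t^{n}$ but instead recovers the integer $n$ directly via the congruence $Y_{n}\equiv n \pmod{t-1}$, which already makes $\Z$ itself Diophantine in $k[t]$; it is only in positive characteristic, where this congruence collapses modulo $p$, that one is forced into an indirect encoding of the sort you describe, and there Denef's argument in \cite{DenefPoly2} is genuinely more elaborate than a rerun of the characteristic-zero scheme. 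You are candid that verifying all the Pell identities and handling the degree bookkeeping in arbitrary characteristic is where the real work lies, and that is accurate: what you have written is a faithful high-level plan rather than a proof.
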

\begin{proof}
See \cite{DenefPoly1} and \cite{DenefPoly2}.
\end{proof}

\begin{Theor} \label{mainthm}
Let $k$ be an infinite field and $\Gamma$ be any non-trivial ordered abelian group. Then $(k(\!(t^{\Gamma})\!),\text{res})$ is $\exists \forall$-undecidable in $L_{\text{res},t}$. The same is true for the Puiseux series field $k\{\!\{t\}\!\}$.
\end{Theor}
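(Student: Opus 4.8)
The plan is to interpret the ring $k[t]$ inside $(k(\!(t^{\Gamma})\!),\text{res})$ with $t$ as a parameter, and then invoke Fact \ref{denfact} (Denef's theorem that Hilbert's tenth problem over $k[t]$ with coefficients in $\Z[t]$ is unsolvable) to conclude undecidability. The crucial tool is Lemma \ref{polynomialpart}: for any $a\in k(\!(t^{\Gamma})\!)$ and $y\in k$ we have $\text{res}(a/(1-ty))=p_a(y)$, where $p_a(X)=\sum_{n\in\N}c_{-n}X^n$ records the ``polar part'' of $a$ as a polynomial over $k$. Thus the map $a\mapsto p_a$ sends the set $\{a : \supp(a)\subseteq \Z^{\le 0}\}$ onto $k[X]$, and this set and the correspondence are captured definably: $a$ has support in $\Z^{\le 0}$ iff $v(a)\ge 0$ is false in a controlled way — more precisely, we want $a\in\Oo_{K}\cdot$(finite polar part), which we can pin down by saying $t^n a\in\Oo_K$ for the relevant $n$, or simply work with the image of the definable function $(a,y)\mapsto\text{res}(a/(1-ty))$.

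\medskip

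Concretely, I would proceed as follows. First, using Lemma \ref{resfieldef}, the subfield $k\subseteq k(\!(t^{\Gamma})\!)$ is $\emptyset$-definable, so quantification over $k$ is available. Second, I interpret $k[t]$ as follows: represent a polynomial $g(t)=\sum_{i=0}^n b_i t^i\in k[t]$ by any element $a\in K$ with $p_a = g$ read appropriately — that is, by Lemma \ref{polynomialpart}, $g$ is coded by the function $y\mapsto\text{res}(a/(1-ty))$ on $k$, and by Lemma \ref{trivialemma} (this is exactly where infiniteness of $k$ enters) a polynomial over the infinite field $k$ is determined by its values on $k$, so two codes $a,a'$ represent the same polynomial iff $\forall y\in k\,(\text{res}(a/(1-ty))=\text{res}(a'/(1-ty)))$ — a $\forall$-condition. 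Addition of polynomials corresponds (via $k$-linearity of $\text{res}$) to addition of codes; multiplication is the delicate point. To get multiplication of $p_a$ and $p_b$ I would use that $p_{a}\cdot p_{b}$ as a polynomial identity is again detectable pointwise on $k$: $c$ codes $p_a\cdot p_b$ iff $\forall y\in k\,(\text{res}(c/(1-ty)) = \text{res}(a/(1-ty))\cdot\text{res}(b/(1-ty)))$, again a $\forall$-condition, and existence of such a $c$ is guaranteed (take $c$ to be an honest polynomial in $t$ with the right coefficients, which lives in $K$). This gives a first-order interpretation of the ring $k[t]$ (with the element $t$) inside $(k(\!(t^{\Gamma})\!),\text{res},t)$.

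\medskip

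Third, since the coefficients from $\Z[t]$ that appear in Denef's undecidable Diophantine problem are themselves interpretable (the element $t$ is a parameter, and integer multiples are built from repeated addition, definably), an existential sentence over $k[t]$ with parameters in $\Z[t]$ translates into an $L_{\text{res},t}$-sentence about $k(\!(t^{\Gamma})\!)$; tracking quantifier complexity, the ``$\exists$ solution in $k[t]$'' becomes an existential block over codes, while ``these codes genuinely represent polynomials and the polynomial equation holds'' is a $\forall$-block (by the pointwise-on-$k$ criteria above), yielding an $\exists\forall$-sentence. Hence $\exists\forall$-undecidability of $(k(\!(t^{\Gamma})\!),\text{res})$ in $L_{\text{res},t}$ follows from Fact \ref{denfact}. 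For the Puiseux series field $k\{\!\{t\}\!\}=\bigcup_n k(\!(t^{1/n})\!)$, the same argument applies verbatim: Lemma \ref{polynomialpart} only used $\Z$-indexed coefficients and the $t$-adic structure, both of which persist, and $k$ is still $\emptyset$-definable by Lemma \ref{resfieldef} (whose proof is insensitive to whether the value group is $\Z$ or $\Q$).

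\medskip

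The main obstacle I anticipate is the \emph{definability of multiplication} in the interpreted copy of $k[t]$: unlike addition, which is immediate from $k$-linearity of $\text{res}$, there is no direct formula producing a code for $p_a\cdot p_b$, so one must argue that (i) such a code exists in $K$ (clear: honest polynomials in $t$ suffice) and (ii) the ``is a code for the product'' relation is first-order, which is where the pointwise-determination Lemma \ref{trivialemma} — hence the hypothesis that $k$ is infinite — is essential. A secondary bookkeeping issue is keeping the quantifier alternation down to $\exists\forall$: one must ensure the ``well-formed code'' conditions and the polynomial equation can all be bundled into a single universal block after the existential block of witnesses, which works because every such condition is of the form ``for all $y\in k$, (polynomial identity in $\text{res}$-terms holds)''.
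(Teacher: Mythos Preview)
Your proposal is correct and uses the same ingredients as the paper: Lemma~\ref{polynomialpart} to realize $p_a(y)$ as $\text{res}(a/(1-ty))$, Lemma~\ref{resfieldef} to quantify over $k$, Lemma~\ref{trivialemma} (infiniteness of $k$) to detect polynomial identities pointwise, and Fact~\ref{denfact} to conclude. The paper's argument is slightly more direct than your interpretation framework: rather than defining multiplication on codes, it translates each fixed system $f_1=\cdots=f_n=0$ over $k[T]$ straight into the sentence $\exists a_1,\dots,a_m\,\forall y\in k\ \bigwedge_i f_i\bigl(\text{res}(\tfrac{a_1}{1-ty}),\dots,\text{res}(\tfrac{a_m}{1-ty}),y\bigr)=0$, so the ``main obstacle'' you flag about multiplication never arises---the $f_i$ are fixed polynomials and their evaluation at the residue-terms is already an $L_{\text{res},t}$-term.
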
 
\begin{proof}
Let $f_1(X_1,...,X_m,T),...,f_n(X_1,...,X_m,T)\in \Z[X_1,...,X_m,T]$, where $n,m\in \N$. The system
$$f_1(X_1,...,X_m,T)=....=f_n(X_1,...,X_m,T)=0$$
has a solution in $k[T]$ if and only if there exist $a_1,...,a_m\in k(\!(t^{\Gamma})\!)$ such that
$$f_1(p_{a_1}(t^{-1}) ,...,p_{a_m}(t^{-1}),t^{-1} )=....=f_n( p_{a_1}(t^{-1}) ,...,p_{a_m}(t^{-1}),t^{-1})=0$$
By Lemma \ref{trivialemma}, this is also equivalent to the existence of $a_1,...,a_m\in k(\!(t^{\Gamma})\!)$ such that
$$f_1(p_{a_1}(y),...,p_{a_m} (y),y)=....=f_n(p_{a_1} (y),...,p_{a_m} (y),y)=0$$ for all $y\in k$. This is expressible in $L_{\text{res},t}$ by Lemma \ref{polynomialpart} and Lemma \ref{resfieldef} and we conclude from Fact \ref{denfact}. The proof works verbatim for the Puiseux series field. 
\end{proof}
In \S 7.2.21 \cite{KartasThesis}, it is also shown that $k[t]\subseteq k(\!(t)\!)$ is definable in $L_{\text{res},t}$.
\subsubsection{A natural example from complex analysis}
In complex analysis, one defines the \textit{residue} of a complex function $f\in \C(\!(t)\!)$ at an isolated singularity $a\in \C$, denoted by $\text{Res}(f,a)$ or $\text{Res}_a(f)$. Numerically, if $f=\sum_{i=-n}^{\infty} c_i (t-a)^i$, we have that $\text{Res}(f,a)=c_{-1}$.

\bc \label{complexnumbers}
We have that $\C(\!(t)\!)$ is $\exists \forall$-undecidable in $L_{\text{Res}_0,t}$. The same is true for the Puiseux series field $\Cc\{\!\{t\}\!\}$.
\ec 
\begin{proof}
Note that $\text{Res}_0(a)=\text{res}(t\cdot a)$ and conclude from Theorem \ref{mainthm}.
\end{proof}
\subsection{Eliminating $t$} \label{elimint}
We now give a different proof of the undecidability of $(k(\!(t^{\Gamma})\!),+,\cdot,\text{res})$, where $k$ is an infinite field and $\Gamma$ is an ordered abelian group of rank $1$. The proof discussed here does not require a parameter for $t$ and even shows that $(\N,+,\cdot)$ is interpretable without parameters. 
\subsubsection{Interpreting the weak monadic second-order theory of $k$ }
Weak monadic second-order logic is the fragment of second-order logic where second-order quantification is restricted to quantification over \textit{finite} subsets. 
\bl \label{eliminatingtlemma}
Let $\Gamma$ be an ordered abelian group of rank $1$. Let $a\in k(\!(t^{\Gamma})\!)$ and $b\in \mathfrak{m}$. Then the set
$$S_{a,b}=\{\beta \in k: \text{res}(\frac{a}{1-b\beta})=0\}$$
is either finite or equal to $k$.
\el 
\begin{proof}
Since $\Gamma$ is of rank $1$, there exists $n\in \N$ such that $a b^n \in \mathfrak{m}$. We then have that
$$ \frac{a}{1-b\beta}\equiv a(1+b\beta+...+b^{n-1}\beta^{n-1}) \mod \mathfrak{m}$$
Since $\text{res}$ is $k$-linear and $\text{res}(\mathfrak{m})=\{0\}$, we get that
$$\text{res}(\frac{a}{1-b\beta})=0\iff p_{a,b}(\beta) =0$$
where 
$$p_{a,b}(X)=\sum_{i=0}^{n-1} \text{res}(ab^i)\cdot X^i \in k[X]$$ 
If $p_{a,b}(X)\equiv 0$, then $S_{a,b}=k$ and otherwise $S_{a,b}$ is finite.
\end{proof}

\bl \label{wmsointer}
Let $\Gamma$ be an ordered abelian group of rank $1$. Then the weak monadic second-order theory of $(k,+,\cdot)$ is $\emptyset$-interpretable in $k(\!(t^{\Gamma})\!)$ in $L_{\text{res}}$. The same is true for the Puiseux series field $k\{\!\{t\}\!\}$.
\el  
\begin{proof}
By Lemma \ref{resfieldef}, we have that the first-order theory $(k,+,\cdot)$ is $\emptyset$-interpretable in $k(\!(t^{\Gamma})\!)$ in $L_{\text{res}}$. By Lemma \ref{eliminatingtlemma}, we have a uniformly $\emptyset$-definable family 
$$\{S_{a,b}:a\in k(\!(t^{\Gamma})\!) \mbox{ and }b\in \mathfrak{m}\}$$ 
of finite subsets of $k$ together with $k$ itself. We claim that every finite $S\subseteq k$ arises as $S_{a,b}$, for some $a\in k(\!(t^{\Gamma})\!) \mbox{ and }b\in \mathfrak{m}$. Given a finite $S\subseteq k$, we set
$$a=\prod_{s\in S} (t^{-1}-s) \mbox{ and }b=t$$  
By Lemma  \ref{polynomialpart}, we get indeed that 
$$S_{a,b}=\{y:p_a(y)=0\}=S$$ 
We thus encode the weak monadic second-order theory of $(k,+,\cdot)$. The proof works verbatim for the Puiseux series field. 
\end{proof}
\bl  \label{secondordermonadic}
Let $k$ be an infinite field. Then the weak monadic second-order theory of $(k,+,\cdot)$ interprets $(\N,+,\cdot)$. In particular, it is undecidable.
\el 
\begin{proof}
See \S 1 \cite{Cherlin}.
\end{proof}

\begin{Theore} \label{interarithmetic}
Let $k$ be an infinite field and $\Gamma$ be an ordered abelian group of rank $1$. Then $(k(\!(t^{\Gamma})\!),\text{res})$ interprets $(\N,+,\cdot)$. In particular, $k(\!(t^{\Gamma})\!)$ is undecidable in $L_{\text{res}}$. The same is true for the Puiseux series field $k\{\!\{t\}\!\}$.
\end{Theore}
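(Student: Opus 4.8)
The plan is to chain together the interpretability results already established. By Lemma~\ref{wmsointer}, the weak monadic second-order theory of $(k,+,\cdot)$ is $\emptyset$-interpretable in $(k(\!(t^{\Gamma})\!),\text{res})$ in the language $L_{\text{res}}$. By Lemma~\ref{secondordermonadic}, since $k$ is infinite, this weak monadic second-order theory in turn interprets $(\N,+,\cdot)$. Interpretations compose, so we obtain an $\emptyset$-interpretation of $(\N,+,\cdot)$ inside $(k(\!(t^{\Gamma})\!),\text{res})$; parameter-freeness is preserved since both layers are over $\emptyset$.

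The undecidability statement then follows formally: $(\N,+,\cdot)$ is undecidable, and any structure interpreting an undecidable structure is itself undecidable, so $k(\!(t^{\Gamma})\!)$ is undecidable in $L_{\text{res}}$. For the Puiseux series field, Lemmas~\ref{wmsointer} and \ref{secondordermonadic} were noted to hold verbatim for $k\{\!\{t\}\!\}$, so the identical argument yields both the interpretation of $(\N,+,\cdot)$ in $(k\{\!\{t\}\!\},\text{res})$ and its undecidability in $L_{\text{res}}$.

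The only point worth spelling out, rather than a genuine obstacle, is the mechanics of composing the two interpretations. The inner layer --- coding finite subsets of $k$ by the uniformly $\emptyset$-definable family $\{S_{a,b}: a\in k(\!(t^{\Gamma})\!),\ b\in \mathfrak{m}\}$ of Lemma~\ref{eliminatingtlemma}, together with the $\emptyset$-interpretation of the field $k$ itself furnished by Lemma~\ref{resfieldef} --- is already first-order over $(k(\!(t^{\Gamma})\!),\text{res})$, so Cherlin's construction underlying Lemma~\ref{secondordermonadic} can be transported through it without change. All the substantive work has been carried out in the preceding lemmas --- definability of $k$, the computation of $\text{res}(a/(1-b\beta))$, and Cherlin's interpretation of arithmetic in the weak monadic second-order theory of an infinite field --- and the present statement is merely their assembly.
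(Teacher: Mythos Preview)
Your proposal is correct and follows exactly the paper's approach: the theorem is deduced immediately from Lemma~\ref{wmsointer} and Lemma~\ref{secondordermonadic}, with the Puiseux case handled verbatim. Your added remarks on composing interpretations and parameter-freeness are fine but go slightly beyond what the paper writes, which is simply ``From Lemma~\ref{wmsointer} and Lemma~\ref{secondordermonadic}.''
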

\begin{proof}
From Lemma \ref{wmsointer} and Lemma \ref{secondordermonadic}.
\end{proof}

\begin{rem}
In case $k$ is an infinite perfect field of positive characteristic, the undecidability of $(k(\!(t)\!),+,\cdot, \text{res})$ also follows from Lemma \ref{resfieldef} and \S 2 \cite{Cherlin} which shows that $k(\!(t)\!)$ is undecidable with a predicate for $k\subseteq k(\!(t)\!)$.
\end{rem}

\subsection*{Acknowledgements}
I wish to thank E. Hrushovski for suggesting the problem and for an instructive discussion on the elimination of the parameter. I also thank J. Koenigsmann for careful readings of earlier drafts.

\bibliographystyle{alpha}
\bibliography{references3}
\Addresses
\end{document}